\tikzset{elegant/.style={smooth,thick,samples=50,cyan}}
\tikzset{eaxis/.style={->,>=stealth}}
\tikzset{liltext/.style={font=\tiny}}
\newtheorem{theorem}{Theorem}[section]
\newtheorem{prop}{Proposition}[section]
\newtheorem{remark}{Remark}[section]
\newcommand{\ml}{\mathcal}
\newcommand{\mb}{\mathbb}
\DeclareMathOperator{\divv}{div}
\def\XXint#1#2#3{{\setbox0=\hbox{$#1{#2#3}{\int}$ }
		\vcenter{\hbox{$#2#3$ }}\kern-.6\wd0}}
\title{Large time behavior for the classical wave equation with different regular data and its applications}
\author[1]{Wenhui Chen\thanks{Wenhui Chen (wenhui.chen.math@gmail.com)}}
\affil[1]{School of Mathematics and Information Science, Guangzhou University,\authorcr 510006 Guangzhou, China}
\author[2]{Ryo Ikehata\thanks{Ryo Ikehata (ikehatar@hiroshima-u.ac.jp)}}
\affil[2]{Department of Mathematics, Division of Educational Sciences, Graduate School of Humanities and Social Sciences, Hiroshima University, 739-8524 Higashi-Hiroshima, Japan}
\date{}
\begin{document}
		\maketitle

		\begin{abstract}
		\medskip	
In this	paper, we mainly consider large time behavior for the classical free wave equation $u_{tt}-\Delta u=0$ in $\mb{R}^n$.
We derive some large time optimal estimates for the quantity of solution $\|u(t,\cdot)\|_{L^2}$ with initial data belonging to $L^2$ or with additional weighted $L^1$ integrabilities. Particularly, some thresholds are discovered for the (local or global in time) stabilization of this quantity.
 We also apply these results to the wave equation with scale-invariant terms, the undamped $\sigma$-evolution equation, the critical Moore-Gibson-Thompson equation, and the linearized compressible Euler system.\\
			
			\noindent\textbf{Keywords:} wave equation, blow-up in infinite time, large time optimal estimate, stability threshold, regularity of initial data \\
			
			\noindent\textbf{AMS Classification (2020)}  35L05, 35B40, 35B44, 35B35
		\end{abstract}
\fontsize{12}{15}
\selectfont
\section{Introduction}\label{Section_Introduction}\setcounter{equation}{0}
\hspace{5mm}In this manuscript, we are mainly concerned with the well-known Cauchy problem for the following classical wave equation, which is widely applied in elasticity, acoustics and electromagnetism starting in the 16-th century:
\begin{align}\label{Eq-Waves}
\begin{cases}
u_{tt}-\Delta u=0,&x\in\mb{R}^n,\ t>0,\\
(u,u_t)(0,x)=(u_0,u_1)(x),&x\in\mb{R}^n,
\end{cases}
\end{align}
with different regularity on the initial data $u_1$. Let us introduce the integral (0-th moment condition) of vector $\mathbf{w}_0=\mathbf{w}_0(x)$ by 
\begin{align*}
P_{\mathbf{w}_0}:=\left(\,\int_{\mb{R}^n}\mathbf{w}_0^{(1)}(x)\,\mathrm{d}x,\dots,\int_{\mb{R}^n}\mathbf{w}_0^{(n)}(x)\,\mathrm{d}x\right)\ \ \mbox{with}\ \ \mathbf{w}_0:=(\mathbf{w}_0^{(1)},\dots,\mathbf{w}_0^{(n)})\in\mb{R}^n.
\end{align*} 
Concerning $\nabla^su_1\in L^2\cap L^{1,1}$ with $s\geqslant0$, large time behavior, especially, the local and global (in time) stabilities of crucial quantity 
\begin{align*}
M(t):=\|u(t,\cdot)\|_{L^2}
\end{align*}
 being our first contribution is classified by the critical dimensions $n_0:=2s$ and $n_1:=2+2s$, respectively, as follows:
\begin{itemize}
	\item it blows up in finite time $M(t)=+\infty$ if $n\leqslant n_0$;
	\item it blows up in infinite time with polynomial rate $M(t)\approx t^{1+s-\frac{n}{2}}|P_{\nabla^s u_1}|$ if $n_0<n<n_1$;
	\item it blows up in infinite time with logarithmic rate $M(t)\approx\sqrt{\ln t}
	\,|P_{\nabla^s u_1}|$ if $n=n_1$;
	\item it is globally in time stable $M(t)\approx |P_{\nabla^s u_1}|$ for large time if $n>n_1$.
\end{itemize}
All these estimates are optimal up to unexpressed multiplicative constants.
If we drop the $L^{1,1}$ integrability of initial data, i.e. $u_1\in L^2$ only, and take a special form of it, this quantity grows polynomially $t^{1-\epsilon}\lesssim M(t)\lesssim t$ with a sufficiently small constant $\epsilon>0$ for all dimensions $n\geqslant 1$.

Let us state our second contribution. To stabilize the quantity $M(t)$ globally in time for any $n\geqslant 1$ (particularly, the lower dimensions $n\leqslant n_1$), we propose an additional condition $\nabla^su_1\in L^{1,\kappa}$ with $\lfloor\kappa\rfloor>2+s-\frac{n}{2}$ and the vanishing $|\alpha|$-th moment condition on $\nabla^s u_1$ up to $|\alpha|<\lfloor\kappa\rfloor$. Precisely, it holds that $M(t)\approx|P_{x^{\lfloor\kappa\rfloor-1}\nabla^su_1}|$ for large time. Nevertheless, it will blow up in infinite time if $\lfloor\kappa\rfloor=2+s-\frac{n}{2}$ for $1+s-\frac{n}{2}\in\mb{N}_0$. These results imply some influence from the regularity of initial data to sharp large time behavior of solutions.

The study of free wave equation is well-established in the last centuries from the viewpoint of scattering theory and/or local energy decay, for example, \cite{Morawetz=1961,Ralston=1969,Morawetz-Ralston-Strauss=1978,Melrose=1979,Lax-Phillips=1989,Ikehata=2004-wave} and a large number of references therein. In particular, it can be found from \cite{Morawetz=1961} that the $L^2$-boundedness of corresponding solution itself plays an important role in the estimates of local energy decay. We are just going to review some related works on the linear Cauchy problem \eqref{Eq-Waves}. It is well-known that the pioneering works \cite{Strichartz=1970,Peral=1980} derived some $L^p-L^q$ upper bound estimates of solution $u(t,\cdot)$ with a suitable range for $p,q$ (see later Remark \ref{Rem-Lp-Lq} or the summary in \cite[Chapter 16]{Ebert-Reissig=2018}). Concerning the radially symmetric initial data, these $L^p-L^q$ estimates for the radial solution were improved by \cite{Kubo-Kubota=1995,Kubo-Kubota=1998,Ebert-Kapp-Picon=2016}. The recent paper \cite{Ikehata=2023} re-considered the optimal $L^2$ estimates for the solution $u(t,\cdot)$ in lower dimensions $n\in\{1,2\}$. By assuming $u_1\in L^{1,1}$ with $|P_{u_1}|\neq0$, the author of \cite{Ikehata=2023} obtained optimal growth estimates for $n=1$ (with the polynomial rate) and $n=2$ (with the logarithmic rate) as $t\gg1$, which implies the infinite time blow-up for $n\leqslant 2$. However, the lower bound estimates for higher dimensions $n\geqslant 3$ are still unknown (surely, we know the boundedness from its upper side). The first question naturally arises:
\begin{center}
\textbf{Question I:} \emph{What are the large time optimal estimates for the higher dimensions $n\geqslant 3$?}
\end{center}  From the stability perspective, when $n\leqslant 2$ in \cite{Ikehata=2023} the quantity $M(t)=\|u(t,\cdot)\|_{L^2}$ with $|P_{u_1}|\neq0$ is not stable (with some growth properties) for large time, which arises another question: 
\begin{center}
\textbf{Question II:} \emph{What is the critical condition on regularity of initial data to stabilize $M(t)$?}
\end{center} Afterwards, these results were generalized to the free plate equation \cite{Ikehata=2024}, the critical Moore-Gibson-Thompson equation \cite{Chen=2024}, the fractional wave equation \cite{Ikeda-Zhao=2024}, the thermoelastic system of type II \cite{Chen-Ikehata=2024}. All these mentioned works \cite{Ikehata=2023,Ikehata=2024,Chen=2024,Ikeda-Zhao=2024,Chen-Ikehata=2024} are based on the $L^{1,1}$ data and studied lower dimensions, therefore, it seems interesting to investigate how does the regularity of initial data influence on large time behavior of solutions to these undamped evolution equations in $\mb{R}^n$ for any $n\geqslant 1$, e.g. the higher regular data $\nabla^su_1\in L^{1,\kappa}$ with $s\geqslant 0$, or the $L^2$ data without asking any $L^1$ integrability. Such a viewpoint from regularity gives a slight different aspect to the research that has been done on free waves so far.

To answer these above questions, we later obtain some basic lemmas in Section \ref{Section-Lemma} by using the Fourier splitting method. For the sharp upper bound estimates, some time-dependent splitting functions (see Figure \ref{imggg}) are introduced for a general kernel. Then, for the sharp lower bound estimates, we propose a new parameter-dependent exponential function for higher dimensions and shrink the domain of integration suitably in each dimension. We not only solve these problems for the free wave equation \eqref{Eq-Waves} in Section \ref{Section_Main_Results}, but also derive large time behavior for some related physical models as applications of our results, including the wave equation with scale-invariant terms, the free plate/beam equation, the undamped $\sigma$-evolution equation with fractional Laplacian, the critical Moore-Gibson-Thompson equation from linear acoustics in inviscid fluids, and the linearized compressible Euler system from fluid mechanics. These results include (for lower dimensions) as well as improve (for higher dimensions) \cite{Ikehata=2023,Ikehata=2024,Chen=2024}. The detailed introduction of these models will be shown in Section \ref{Section-Applications}.

We expect that our new techniques in deriving the neither growth nor decay (lower bound) estimates in Proposition \ref{Prop-Lower} can be widely applied in other evolution equations, for example, the fractional wave equation \cite[Theorem 1.1 for $n\geqslant 2$]{Ikeda-Zhao=2024} and the thermoelastic system of type II \cite[Question in Remark 2.9]{Chen-Ikehata=2024}. These studies need some modifications of our tools, which are beyond the scope in this paper.

\paragraph{\large Notation} The generic constants $c,C$ may vary from line to line but are independent of the time variable. We write $f\lesssim g$ if there exists a positive constant $C$ such that $f\leqslant Cg$, analogously for $f\gtrsim g$. The sharp relation $f\approx g$ holds if and only if $g\lesssim f\lesssim g$. We denote by $\lceil a\rceil:=\min\{A\in\mb{Z}:a\leqslant A\}$ the ceiling function, and $\lfloor a\rfloor:=\max\{A\in\mb{Z}:a\geqslant A\}$ the floor function. Let us define the weighted $L^1$ space by
\begin{align*}
	L^{1,\kappa}:=\big\{f\in L^1:\ \|f\|_{L^{1,\kappa}}:=\|(1+|x|)^{\kappa}f\|_{L^1}<+\infty \big\}\ \ \mbox{with}\ \ \kappa\geqslant0.
\end{align*} We now introduce the time-dependent function (that will be used in some large time optimal estimates of solutions) as follows:
\begin{align*}
\ml{D}_{n,\sigma,s}(t):=\begin{cases}
t^{1-\frac{n-2s}{2\sigma}}&\mbox{if}\ \ n\in(2s,2\sigma+2s),\\
\sqrt{\ln t}&\mbox{if}\ \ n=2\sigma+2s,\\
1&\mbox{if}\ \ n\in(2\sigma+2s,+\infty),
\end{cases}
\end{align*}
with $n\in\mb{N}_+$, $\sigma\geqslant 1$ and $s\geqslant 0$. Finally, the surface area of $n$-dimensional unit ball is simply denoted by $\omega_n:=\int_{|\omega|=1}\,\mathrm{d}\sigma_{\omega}$.

\section{Main results}\label{Section_Main_Results}\setcounter{equation}{0}
\hspace{5mm}Let us first state the large time  optimal  $L^2$ estimates of solution with the $\dot{H}^s_1$ (i.e. $\nabla^s L^1$) data, where one may notice some influence of regularity $s$ on the optimal growth rates for lower dimensions. The first critical dimension $n_0=2s$ is introduced for the local (in time) $L^2$-stability. It also verifies the second critical dimension $n_1=2+2s$ for large time stability, in other words, when $n\in(2+2s,+\infty)$ the solution in the $L^2$ norm is marginally stable without any decay property, nevertheless, when $n\in(2s,2+2s]$ it is instable with some optimal growth properties for large time.
\begin{theorem}\label{Thm-L1-data}
Let $0\leqslant s<\frac{n}{2}$. Let us assume $u_0\in L^2$ and $\nabla^s u_1\in L^2\cap L^1$ for the free wave equation \eqref{Eq-Waves}. Then, the solution satisfies the upper bound estimates
\begin{align*}
\|u(t,\cdot)\|_{L^2}\lesssim\|(u_0,\nabla^su_1)\|_{L^2\times (L^2\cap L^1)}\times\begin{cases}
t^{1+s-\frac{n}{2}}&\mbox{if}\ \ n\in(2s,2+2s),\\
\sqrt{\ln t}&\mbox{if}\ \ n=2+2s,\\
1&\mbox{if}\ \ n\in(2+2s,+\infty),
\end{cases}
\end{align*}
for large time $t\gg1$. Furthermore, by assuming $u_0\in L^2\cap L^1$ and $\nabla^s u_1\in L^{1,1}$ with $|P_{\nabla^s u_1}|\neq0$, then the solution satisfies the lower bound estimates
\begin{align*}
\|u(t,\cdot)\|_{L^2}\gtrsim|P_{\nabla^su_1}|\times\begin{cases}
	t^{1+s-\frac{n}{2}}&\mbox{if}\ \ n\in(2s,2+2s),\\
	\sqrt{\ln t}&\mbox{if}\ \ n=2+2s,\\
	1&\mbox{if}\ \ n\in(2+2s,+\infty),
\end{cases}
\end{align*}
for large time $t\gg1$. However, concerning the higher regularity $s\geqslant \frac{n}{2}$, the solution blows up in finite time in the sense that
\begin{align*}
\|u(t,\cdot)\|_{L^2}=+\infty\ \ \mbox{for}\ \ t\in(0,t_0]
\end{align*}
with any $t_0>0$, provided that $u_0\in L^2$ and $\nabla^s u_1\in L^{1,1}$ with $|P_{\nabla^s u_1}|\neq0$.
\end{theorem}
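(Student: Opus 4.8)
The plan is to pass to the Fourier side, where the solution of \eqref{Eq-Waves} reads
\[
\widehat{u}(t,\xi)=\cos(|\xi|t)\,\widehat{u}_0(\xi)+\frac{\sin(|\xi|t)}{|\xi|}\,\widehat{u}_1(\xi),
\]
and to use Plancherel's identity to write $\|u(t,\cdot)\|_{L^2}^2=\|\widehat{u}(t,\cdot)\|_{L^2}^2$. Since $|\cos(|\xi|t)|\leqslant 1$, the $u_0$-contribution is bounded by $\|u_0\|_{L^2}$ uniformly in $t$, so all the time dependence is carried by the $u_1$-term. Writing $\widehat{u}_1(\xi)=|\xi|^{-s}\widehat{\nabla^s u_1}(\xi)$, the object to analyse is
\[
K(t):=\int_{\mathbb{R}^n}\frac{\sin^2(|\xi|t)}{|\xi|^{2+2s}}\,\big|\widehat{\nabla^s u_1}(\xi)\big|^2\,\mathrm{d}\xi ,
\]
and the three stated regimes will come from the competition between the low-frequency singularity $|\xi|^{-(2+2s)}$ and the oscillation $\sin^2(|\xi|t)$.

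For the upper bound I would split $\mathbb{R}^n$ into a low-frequency zone $\{|\xi|\leqslant\varepsilon\}$ (possibly with a time-dependent $\varepsilon$, as hinted in the introduction) and its complement. On the high-frequency zone $\nabla^s u_1\in L^2$ together with $|\xi|^{-(2+2s)}\lesssim 1$ gives a uniform bound. On the low-frequency zone I would use $|\widehat{\nabla^s u_1}(\xi)|\leqslant\|\nabla^s u_1\|_{L^1}$ and the elementary inequality $\sin^2(|\xi|t)\leqslant\min\{1,|\xi|^2t^2\}$, pass to polar coordinates, and substitute $\rho=|\xi|t$ to reach the model one-dimensional integral $t^{2+2s-n}\int_0^{\varepsilon t}\rho^{\,n-3-2s}\sin^2\rho\,\mathrm{d}\rho$. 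Convergence at $\rho=0$ forces $n>2s$, which is exactly the standing assumption $s<\tfrac n2$; the behaviour at $\rho\to\infty$ separates the cases: the integral converges for $n<2+2s$ (producing the factor $t^{2+2s-n}$), diverges logarithmically for $n=2+2s$ (producing $\ln t$), while for $n>2+2s$ the singularity $|\xi|^{-(2+2s)}$ is already integrable against $\mathrm{d}\xi$, so $K(t)$ is bounded uniformly in $t$. Taking square roots reproduces the rates $t^{1+s-n/2}$, $\sqrt{\ln t}$ and $1$.

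For the lower bound the decisive ingredient is the nondegeneracy $\widehat{\nabla^s u_1}(0)=P_{\nabla^s u_1}\neq0$; since $\nabla^s u_1\in L^{1,1}$ its Fourier transform is of class $C^1$, so there is a ball $\{|\xi|\leqslant\delta\}$ on which $|\widehat{\nabla^s u_1}(\xi)|\geqslant\tfrac12|P_{\nabla^s u_1}|$. In the growth regimes $n\leqslant 2+2s$ I would restrict $K(t)$ to this ball, rescale as above, and absorb the $u_0$-part through the robust inequality $|a+b|^2\geqslant\tfrac12|b|^2-|a|^2$, so that $\|u(t,\cdot)\|_{L^2}^2\gtrsim K(t)-\|u_0\|_{L^2}^2$ with $K(t)\to\infty$. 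The genuinely delicate case, which I expect to be the main obstacle, is $n>2+2s$: there $K(t)$ is only of size $O(1)$, the crude bound $\sin^2\geqslant0$ yields nothing, and one must localise to a union of frequency annuli on which $\sin^2(|\xi|t)$ stays bounded away from zero (namely $|\xi|t\in[\tfrac\pi4,\tfrac{3\pi}4]\bmod\pi$) and estimate their contribution from below — this is precisely the ``shrinking the domain'' device announced in the introduction, and here $u_0\in L^1$ is used so that $\widehat{u}_0$ is bounded and the $\cos$--$\sin$ cross term cannot cancel the positive contribution.

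Finally, for the higher regularity $s\geqslant\tfrac n2$ the blow-up is forced by a non-integrable low-frequency singularity of $K(t)$. Near the origin $|\widehat{\nabla^s u_1}(\xi)|^2\approx|P_{\nabla^s u_1}|^2>0$ and $\sin^2(|\xi|t)\approx|\xi|^2t^2$ for any fixed $t>0$, so the integrand behaves like $|P_{\nabla^s u_1}|^2\,t^2\,|\xi|^{-2s}$, and $\int_{|\xi|\leqslant\delta}|\xi|^{-2s}\,\mathrm{d}\xi=\omega_n\int_0^\delta r^{\,n-1-2s}\,\mathrm{d}r=+\infty$ exactly when $n-1-2s\leqslant-1$, i.e. $s\geqslant\tfrac n2$. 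Since the $u_0$-part is finite in $L^2$ while this $u_1$-part is infinite, their sum is infinite, whence $\|u(t,\cdot)\|_{L^2}=+\infty$ for every $t\in(0,t_0]$; the only points needing a little extra care are the borderline $n=2s$ (where the divergence is logarithmic) and the observation that the factor $t^2$ keeps the integrand strictly positive for all $t>0$, so the blow-up is genuine and not an artefact of the instant $t=0$.
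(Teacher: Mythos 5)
Your overall strategy --- Plancherel, the scaling substitution $\rho=|\xi|t$ for the upper bounds, the triangle inequality $|a+b|^2\geqslant\frac12|b|^2-|a|^2$ together with the nondegeneracy of $\widehat{\nabla^su_1}$ near $\xi=0$ for the lower bounds, and the non-integrability of $|\xi|^{-2s}$ at the origin for the blow-up when $s\geqslant\frac n2$ --- is the same Fourier-splitting approach as the paper's (Propositions \ref{Prop-Upper}--\ref{Prop-Blow-up} specialized to $\sigma=1$, $\widehat v_1(\xi)=|\xi|^s\widehat u_1(\xi)$), and those pieces are correct as you describe them. Your single substitution $\rho=|\xi|t$ even packages the paper's two-threshold splitting (at $t^{-1}$ and $t^{-1/\alpha_0}$, resp.\ $(\ln t)^{-1/\alpha_1}$) into one step, and your treatment of the regimes $n\leqslant 2+2s$ and of the finite-time blow-up would go through as written.

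The one place where your argument is not yet a proof is exactly the case you flag as the main obstacle, $n\in(2+2s,+\infty)$, and the missing ingredient is quantitative rather than structural. After restricting to the annuli $\{|\xi|t\in[\frac\pi4+j\pi,\frac{3\pi}4+j\pi]\}$ you do obtain a positive $O(1)$ contribution from the $v_1$-term, and $u_0\in L^1$ gives $\|\widehat u_0\|_{L^\infty}\leqslant\|u_0\|_{L^1}$; but ``$\widehat u_0$ is bounded'' does not by itself prevent the subtracted term $\int|\widehat u_0(\xi)|^2\,\mathrm{d}\xi$ (nor the $\|\nabla^su_1\|_{L^{1,1}}^2$-error coming from expanding $|\widehat v_1(\xi)|^2$ around $P_{\nabla^su_1}$ as in \eqref{Ineq-01}) from exceeding that $O(1)$ gain: all three quantities are bounded in $t$, and their implicit constants involve different, unrelated norms of the data. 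The paper resolves this by inserting a free scale parameter, namely the weight $\mathrm{e}^{-\gamma|\xi|^2}$ with $\gamma$ large, under which the main term scales like $\gamma^{-\frac{n-2-2s}{2}}|P_{\nabla^su_1}|^2$ while the two error terms scale like $\gamma^{-\frac{n-2s}{2}}$ and $\gamma^{-\frac n2}$, i.e.\ strictly faster; choosing $\gamma$ large depending on the data, as in \eqref{Est-03}, makes the difference positive. Equivalently, in your formulation you may intersect every integral with a ball $\{|\xi|\leqslant\delta\}$ and tune $\delta$ small: the annuli occupy a fixed fraction of $[\nu_0,\delta]$, so the main term is $\gtrsim\delta^{n-2-2s}|P_{\nabla^su_1}|^2$, while the $u_0$-term is $\lesssim\delta^{n}\|u_0\|_{L^1}^2$ and the $L^{1,1}$-error is $\lesssim\delta^{n-2s}\|\nabla^su_1\|_{L^{1,1}}^2$, both of higher order in $\delta$. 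Some such tuning step must be added; without it the lower bound for $n>2+2s$ does not close.
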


Let us summarize all derived estimates and recall $n_0=2s$ as well as $n_1=2+2s$. For large time $t\gg1$, Theorem \ref{Thm-L1-data} may be encapsulated by the optimal estimates as follows:
\begin{align}\label{Optimal-Est-Waves}
\|u(t,\cdot)\|_{L^2}\approx\begin{cases}
+\infty&\mbox{if}\ \ n\in[1,n_0],\\
t^{1+s-\frac{n}{2}}&\mbox{if}\ \ n\in(n_0,n_1),\\
\sqrt{\ln t}&\mbox{if}\ \ n=n_1,\\
1&\mbox{if}\ \ n\in(n_1,+\infty),
\end{cases}
\end{align}
with the crucial initial data $\nabla^s u_1\in L^{1,1}$ for $s\geqslant0$, provided that $|P_{\nabla^s u_1}|>0$. Concerning the classical case $s=0$, i.e. $u_1\in L^{1,1}$ with $|P_{u_1}|>0$, the large time optimal estimates \eqref{Optimal-Est-Waves} immediately turn into 
\begin{align*}
	\|u(t,\cdot)\|_{L^2}\approx\begin{cases}
		\sqrt{t}&\mbox{if}\ \ n=1,\\
		\sqrt{\ln t}&\mbox{if}\ \ n=2,\\
		1&\mbox{if}\ \ n\geqslant 3,
	\end{cases}
\end{align*}
which exactly coincide with the recent results \cite[Theorems 1.1 and 1.2]{Ikehata=2023} when $n\leqslant 2$, but the neither decay nor growth estimates when $n\geqslant 3$ are new, particularly, the lower bounds.
\begin{remark}\label{Rem-Lp-Lq}
Taking $u_0=0$ and $u_1\in L^p\cap L^{1,1}$ with $|P_{u_1}|>0$ in the free wave equation \eqref{Eq-Waves}, the pioneering works \cite{Strichartz=1970,Peral=1980} showed the following $L^p-L^q$ estimates:
\begin{align}\label{Lp-Lq-upper}
\|u(t,\cdot)\|_{L^q}\lesssim t^{1-\frac{n}{p}+\frac{n}{q}}\|u_1\|_{L^p},
\end{align}
where $(\frac{1}{p},\frac{1}{q})$ belongs to the admissible closed triangle $\triangle_{Q_1Q_2Q_3}$ with the vertices
\begin{align*}
Q_1:=\left(\frac{1}{2}+\frac{1}{n+1},\frac{1}{2}-\frac{1}{n+1}\right),\  Q_2:=\left(\frac{1}{2}-\frac{1}{n-1},\frac{1}{2}-\frac{1}{n-1}\right),\  Q_3:=\left(\frac{1}{2}+\frac{1}{n-1},\frac{1}{2}+\frac{1}{n-1}\right)
\end{align*}
when $n\geqslant 3$ and $Q_2:=(0,0)$, $Q_3:=(1,1)$ when $n\leqslant 2$. Focusing on the higher dimensions $n\geqslant 3$ by H\"older's inequality, combining with the lower bound estimates in Theorem \ref{Thm-L1-data}, one easily derives
\begin{align*}
|P_{u_1}|\lesssim\|u(t,\cdot)\|_{L^2}&\lesssim\|u(t,\cdot)\|_{L^m}\|u(t,\cdot)\|_{L^q}\\
&\lesssim t^{1-\frac{n}{p}+\frac{n}{m}}\|u_1\|_{L^p}\|u(t,\cdot)\|_{L^q},
\end{align*}
with $\frac{1}{2}=\frac{1}{m}+\frac{1}{q}$ (thus, $q\geqslant 2$) and $(\frac{1}{p},\frac{1}{m})\in\triangle_{Q_1Q_2Q_3}$. Namely,
\begin{align*}
\|u(t,\cdot)\|_{L^q}\gtrsim t^{-1+\frac{n}{p}-\frac{n}{2}+\frac{n}{q}}\frac{|P_{u_1}|}{\|u_1\|_{L^p}},
\end{align*}
if we assume $\|u_1\|_{L^p}\neq0$ additionally, carrying $(\frac{1}{p},\frac{1}{2}-\frac{1}{q})\in \triangle_{Q_1Q_2Q_3}$, which partly induces  large time growth estimates in the $L^q$ norm under the restriction
\begin{align*}
\left\{\left(\frac{1}{p},\frac{1}{q}\right)\in[0,1]^2:\  \left(\frac{1}{p},\frac{1}{2}-\frac{1}{q}\right)\in \triangle_{Q_1Q_2Q_3} \ \ \mbox{such that}\ \ \frac{1}{p}+\frac{1}{q}>\frac{1}{2}+\frac{1}{n} \right\}.
\end{align*}
 Associated with $(\frac{1}{p},\frac{1}{q}),(\frac{1}{p},\frac{1}{2}-\frac{1}{q})\in\triangle_{Q_1Q_2Q_3}$, we are able to claim
\begin{align*}
\|u(t,\cdot)\|_{L^q}\gtrsim t^{1-\frac{n}{p}+\frac{n}{q}}\frac{|P_{u_1}|}{\|u_1\|_{L^p}}\ \ \mbox{when}\ \ p=\frac{4n}{n+4}>1\ (n\geqslant 3).
\end{align*}
It partly verifies  the optimal rates of \eqref{Lp-Lq-upper} in the sense of $L^{\frac{4n}{n+4}}-L^q$ estimates as an application of Theorem \ref{Thm-L1-data}. We next state two typical examples of our claim, where in general we assume $u_0=0$, $u_1\in L^{\frac{4n}{n+4}}\cap L^{1,1}$, $|P_{u_1}|>0$ and $\|u_1\|_{L^{\frac{4n}{n+4}}}>0$ for the free wave equation \eqref{Eq-Waves} when $n\geqslant 3$.
\begin{itemize}
	\item When $n=3$, concerning $q\in[\frac{36}{11},\frac{36}{7}]$ the following optimal estimate:
	\begin{align*}
		t^{-\frac{3}{4}+\frac{3}{q}}\frac{|P_{u_1}|}{\|u_1\|_{L^{\frac{12}{7}}(\mb{R}^3)}}\lesssim \|u(t,\cdot)\|_{L^q(\mb{R}^3)}\lesssim t^{-\frac{3}{4}+\frac{3}{q}}\|u_1\|_{L^{\frac{12}{7}}(\mb{R}^3)}
	\end{align*}
	holds for $t\gg1$, which grows optimally when $q\in[\frac{36}{11},4)$; decays optimally when $q\in(4,\frac{36}{7}]$; is neither decay nor growth when $q=4$.
	\item When $n=4$, concerning $q=4$ the following optimal bounded estimate:
	\begin{align*}
		\frac{|P_{u_1}|}{\|u_1\|_{L^2(\mb{R}^4)}}\lesssim\|u(t,\cdot)\|_{L^4(\mb{R}^4)}\lesssim\|u_1\|_{L^2(\mb{R}^4)}
	\end{align*}
holds for $t\gg1$.
\end{itemize}
\end{remark}

By dropping the $L^1$ (or, further $L^{1,1}$) assumption for the initial data $u_1\in L^2$, we cannot expect non-growth estimates in general. This is caused by the small frequencies. We next provide an example with a special $L^2$ data, which implies the importance of $L^1$ integrable data to stabilize the solution for higher dimensions as $t\gg1$.
\begin{theorem}\label{Thm-L2-data}
 Let us assume $u_0\in L^2$ and $u_1\in L^2$ such that $|\widehat{u}_1(\xi)|\gtrsim|\xi|^{-\frac{n}{2}+\frac{\epsilon}{2}}$ as $|\xi|\in(0,1]$ with a sufficiently small constant $\epsilon>0$ for the free wave equation \eqref{Eq-Waves}. Then, the solution satisfies the almost optimal estimate
\begin{align*}
t^{1-\frac{\epsilon}{2}}\lesssim\|u(t,\cdot)\|_{L^2}\lesssim t\,\|(u_0,u_1)\|_{L^2\times L^2}
\end{align*}
for large time $t\gg1$.
\end{theorem}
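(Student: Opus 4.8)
The plan is to pass to the Fourier side and exploit the explicit representation of the solution. By the partial Fourier transform with respect to $x$, the solution of \eqref{Eq-Waves} is given by
\begin{align*}
\widehat{u}(t,\xi)=\widehat{u}_0(\xi)\cos(|\xi|t)+\widehat{u}_1(\xi)\frac{\sin(|\xi|t)}{|\xi|},
\end{align*}
so that Plancherel's theorem reduces the problem to estimating $\|\widehat{u}(t,\cdot)\|_{L^2}$. The low frequencies $|\xi|\in(0,1]$, where the hypothesis on $\widehat{u}_1$ lives, will be the source of the growth, while the high frequencies are harmless.

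For the upper bound I would simply use $|\cos(|\xi|t)|\leqslant 1$ together with the elementary inequality $|\sin(|\xi|t)|\leqslant|\xi|t$, the latter giving $\frac{|\sin(|\xi|t)|}{|\xi|}\leqslant t$ pointwise. The triangle inequality then produces
\begin{align*}
\|u(t,\cdot)\|_{L^2}\leqslant\|u_0\|_{L^2}+t\,\|u_1\|_{L^2}\lesssim t\,\|(u_0,u_1)\|_{L^2\times L^2}
\end{align*}
for $t\geqslant 1$, which is exactly the claimed upper estimate.

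The heart of the statement is the lower bound. My strategy is to treat the $u_0$-part as a bounded remainder and to extract all the growth from the $u_1$-part. Since $\|\widehat{u}_0(\xi)\cos(|\xi|t)\|_{L^2}\leqslant\|u_0\|_{L^2}$ is uniform in $t$, the reverse triangle inequality gives
\begin{align*}
\|u(t,\cdot)\|_{L^2}\geqslant\left\|\widehat{u}_1(\xi)\frac{\sin(|\xi|t)}{|\xi|}\right\|_{L^2}-\|u_0\|_{L^2},
\end{align*}
so it suffices to show that the first norm grows like $t^{1-\frac{\epsilon}{2}}$, whence for large $t$ it dominates the constant $\|u_0\|_{L^2}$. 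I would shrink the domain of integration to the ball $\{|\xi|\leqslant 1\}$, insert the hypothesis $|\widehat{u}_1(\xi)|^2\gtrsim|\xi|^{-n+\epsilon}$, and pass to polar coordinates to reduce matters to the one-dimensional integral
\begin{align*}
\int_{|\xi|\leqslant 1}\frac{\sin^2(|\xi|t)}{|\xi|^2}|\widehat{u}_1(\xi)|^2\,\mathrm{d}\xi\gtrsim\omega_n\int_0^1\frac{\sin^2(rt)}{r^{3-\epsilon}}\,\mathrm{d}r.
\end{align*}

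The substitution $\rho=rt$ turns the right-hand side into $\omega_n\,t^{2-\epsilon}\int_0^t\frac{\sin^2\rho}{\rho^{3-\epsilon}}\,\mathrm{d}\rho$, and here the role of $\epsilon>0$ becomes transparent: near $\rho=0$ the integrand behaves like $\rho^{-1+\epsilon}$, integrable precisely because $\epsilon>0$, while at infinity it is dominated by $\rho^{-3+\epsilon}$, so $\int_0^t\frac{\sin^2\rho}{\rho^{3-\epsilon}}\,\mathrm{d}\rho$ converges to a strictly positive constant as $t\to+\infty$. This yields the rate $t^{1-\frac{\epsilon}{2}}$ for the $u_1$-part and therefore $\|u(t,\cdot)\|_{L^2}\gtrsim t^{1-\frac{\epsilon}{2}}$ for $t\gg1$. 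The main obstacle is exactly this oscillatory-integral step: one must check that the singular weight $|\xi|^{-n+\epsilon}$, after being multiplied by $|\xi|^{-2}$ and tamed by the $\sin^2$ factor, remains integrable at the origin — which is what forces the threshold $\epsilon>0$ — and that no logarithmic correction arises, so that the extracted rate is the clean power $t^{1-\frac{\epsilon}{2}}$ rather than something larger.
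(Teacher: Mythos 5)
Your proof is correct, and it follows the same overall skeleton as the paper's (Fourier representation, Plancherel, trivial upper bound via $|\sin(|\xi|t)|\leqslant|\xi|t$, lower bound extracted from the $u_1$-part on low frequencies), but the key oscillatory-integral step is handled differently. The paper proves this theorem as a special case of Proposition \ref{Prop-L2-data} for a general kernel with $|\xi|^{\sigma}$, $\sigma\geqslant1$: there the lower bound is obtained by restricting to the union of shells $[\nu_j,\mu_j]$ with $\nu_j=[(\frac14+j)\frac{\pi}{t}]^{1/\sigma}$, on which $|\sin(|\xi|^{\sigma}t)|\geqslant\frac{1}{\sqrt2}$, discarding the region $|\xi|<\nu_0$ near the origin (where the weight $r^{-3+\epsilon}$ is not integrable by itself), and summing using the monotonicity of the radial integrand. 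You instead keep the whole ball $|\xi|\leqslant1$, let the factor $\sin^2(rt)\approx(rt)^2$ tame the singularity at $r=0$, and rescale $\rho=rt$ to reduce everything to the convergence of $\int_0^{\infty}\sin^2(\rho)\,\rho^{-3+\epsilon}\,\mathrm{d}\rho$, which is finite and strictly positive precisely when $0<\epsilon<2$. For $\sigma=1$ your scaling argument is more elementary and arguably cleaner; the paper's shell decomposition is what generalizes painlessly to $|\xi|^{\sigma}t$ with $\sigma>1$, which is needed for the $\sigma$-evolution applications. One small point worth making explicit: the constant you extract is $\int_0^t\sin^2(\rho)\,\rho^{-3+\epsilon}\,\mathrm{d}\rho\geqslant\int_0^1\sin^2(\rho)\,\rho^{-3+\epsilon}\,\mathrm{d}\rho>0$ uniformly for $t\geqslant1$, so the implicit constant in $\gtrsim t^{1-\frac{\epsilon}{2}}$ is indeed independent of $t$; after that, absorbing the bounded remainder $\|u_0\|_{L^2}$ for $t\gg1$ is immediate, exactly as you say.
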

\begin{remark}
	The assumption $|\widehat{u}_1(\xi)|^2\gtrsim|\xi|^{-n+\epsilon}$ for $|\xi|\in(0,1]$ does not contradict with $\widehat{u}_1(\xi)\in L^2$ for $\in\mb{R}^n$ due to $0<\epsilon\ll 1$, whose detail is referred to \eqref{L2data}. We conjecture that the large time optimal estimate $\|u(t,\cdot)\|_{L^2}\approx t$ will hold for more general $L^2$ data.
\end{remark}

In summary, we so far discover some influence of different regular data $\nabla^s u_1$ on the classical wave equation \eqref{Eq-Waves} in the whole space $\mb{R}^n$, particularly, large time behavior for the quantity $M(t)=\|u(t,\cdot)\|_{L^2(\mb{R}^n)}$. It seems interesting but still open to investigate the optimal regularity (or the minimum regularity) of initial data $\nabla^su_1$ to stabilize the crucial quantity $\|u(t,\cdot)\|_{L^2(\Omega)}$ for the classical wave equation \eqref{Eq-Waves} in $\Omega\subseteq\mb{R}^n$ for any $n\geqslant 1$, for example, the exterior domain case.

To end this section, we propose a possible answer in the $L^{1,\kappa}$ data framework for the last question (or Question II) in $\mb{R}^n$ for any $n\geqslant 1$. As we seen in Theorem \ref{Thm-L1-data}, for $n\in(2+2s,+\infty)$ the quantity $M(t)=\|u(t,\cdot)\|_{L^2}$ is global (in time) stable when we crucially assume $\nabla^su_1\in L^2\cap L^{1,1}$ and is optimally estimated by $M(t)\approx|P_{\nabla^s u_1}|$ provided that $|P_{\nabla^s u_1}|\neq0$. For this reason, we next study the remaining case $n\in[1,2+2s]$, which is instable from the last setting.
\begin{theorem}\label{Thm-stable}
Let $n\in[1,2+2s]$ with $s\geqslant0$. Let us assume $u_0\in L^2$ and $\nabla^s u_1\in L^2\cap L^{1,\kappa}$ carrying
\begin{align}\label{Stab-Con}
\lfloor\kappa\rfloor>2+s-\frac{n}{2}
\end{align} for the free wave equation \eqref{Eq-Waves}. Additionally, by assuming
\begin{align}\label{Data-stable}
	\int_{\mb{R}^n}x^{\alpha}\,\nabla^su_1(x)\,\mathrm{d}x\begin{cases}
		=0&\mbox{if}\ \ |\alpha|<\lfloor\kappa\rfloor-1,\\
		\neq0&\mbox{if}\ \ |\alpha|=\lfloor\kappa\rfloor-1,
	\end{cases}
\end{align}
then the solution satisfies the optimal estimate
\begin{align}\label{Est-04}
|P_{x^{\lfloor\kappa\rfloor-1}\nabla^su_1}|\lesssim\|u(t,\cdot)\|_{L^2}\lesssim \|u_0\|_{L^2}+\|\nabla^su_1\|_{L^2\cap L^{1,\kappa}}
\end{align}
for large time $t\gg1$.
\end{theorem}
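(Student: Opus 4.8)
The plan is to work entirely on the Fourier side. Writing $\widehat{u}(t,\xi)=\cos(|\xi|t)\,\widehat{u}_0(\xi)+\frac{\sin(|\xi|t)}{|\xi|}\widehat{u}_1(\xi)$ and setting $g:=\nabla^su_1$ so that $\widehat{u}_1(\xi)=|\xi|^{-s}\widehat{g}(\xi)$, Plancherel's identity gives
\begin{align*}
\|u(t,\cdot)\|_{L^2}^2=\int_{\mb{R}^n}\left|\cos(|\xi|t)\,\widehat{u}_0(\xi)+\frac{\sin(|\xi|t)}{|\xi|^{1+s}}\widehat{g}(\xi)\right|^2\mathrm{d}\xi.
\end{align*}
The engine of the whole argument is the low-frequency behaviour of $\widehat{g}$. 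Since $g\in L^{1,\kappa}\subseteq L^{1,\lfloor\kappa\rfloor}$ and, by \eqref{Data-stable}, all moments of order $<\lfloor\kappa\rfloor-1$ vanish, Taylor-expanding $e^{-ix\cdot\xi}$ to order $\lfloor\kappa\rfloor-1$ yields $\widehat{g}(\xi)=Q(\xi)+R(\xi)$, where $Q(\xi)=\sum_{|\alpha|=\lfloor\kappa\rfloor-1}\frac{(-i\xi)^\alpha}{\alpha!}\int_{\mb{R}^n}x^\alpha g(x)\,\mathrm{d}x$ is homogeneous of degree $\lfloor\kappa\rfloor-1$ with coefficient vector of size $\approx|P_{x^{\lfloor\kappa\rfloor-1}\nabla^su_1}|$, and $|R(\xi)|\lesssim|\xi|^{\lfloor\kappa\rfloor}\,\||x|^{\lfloor\kappa\rfloor}g\|_{L^1}$. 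In particular $|\widehat{g}(\xi)|\lesssim|\xi|^{\lfloor\kappa\rfloor-1}$ for small $|\xi|$.

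For the upper bound I would split $\mb{R}^n$ into $\{|\xi|\leqslant1\}$ and $\{|\xi|>1\}$ (or invoke the time-dependent splitting of Section \ref{Section-Lemma}). On $\{|\xi|>1\}$ the weight $|\xi|^{-1-s}$ is bounded, so that region contributes $\lesssim\|u_0\|_{L^2}+\|g\|_{L^2}$ uniformly in $t$. On $\{|\xi|\leqslant1\}$ I bound $|\sin(|\xi|t)|\leqslant1$ and use $|\widehat{g}(\xi)|\lesssim|\xi|^{\lfloor\kappa\rfloor-1}$ to obtain $\big|\frac{\sin(|\xi|t)}{|\xi|^{1+s}}\widehat{g}(\xi)\big|\lesssim|\xi|^{\lfloor\kappa\rfloor-2-s}$; condition \eqref{Stab-Con}, i.e. $\lfloor\kappa\rfloor-2-s>-\frac{n}{2}$, is precisely what makes $|\xi|^{\lfloor\kappa\rfloor-2-s}$ square-integrable near the origin, producing a time-independent bound. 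Adding the trivially bounded cosine part yields the right-hand side of \eqref{Est-04}.

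The substantial part is the lower bound. Expanding the square gives $\|u(t,\cdot)\|_{L^2}^2=A(t)+B(t)+2\,\mathrm{Re}\,C(t)$ with $A(t)=\int\cos^2(|\xi|t)|\widehat{u}_0|^2\geqslant0$, $B(t)=\int\frac{\sin^2(|\xi|t)}{|\xi|^{2+2s}}|\widehat{g}|^2$, and the cross term $C(t)=\frac12\int\frac{\sin(2|\xi|t)}{|\xi|^{1+s}}\widehat{u}_0\overline{\widehat{g}}$. Writing $\sin^2(|\xi|t)=\tfrac12(1-\cos(2|\xi|t))$ and observing that $\frac{|\widehat{g}|^2}{|\xi|^{2+2s}}\in L^1$ (integrable at $0$ by \eqref{Stab-Con} and at $\infty$ since $\widehat{g}\in L^2$), the Riemann--Lebesgue lemma gives $B(t)\to\frac12\int\frac{|\widehat{g}|^2}{|\xi|^{2+2s}}\,\mathrm{d}\xi=:B_\infty$; likewise $\frac{\widehat{u}_0\overline{\widehat{g}}}{|\xi|^{1+s}}\in L^1$ (by Cauchy--Schwarz with \eqref{Stab-Con}), so $C(t)\to0$. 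Hence for $t\gg1$ one has $\|u(t,\cdot)\|_{L^2}^2\geqslant B(t)+2\,\mathrm{Re}\,C(t)\geqslant\tfrac12 B_\infty$. It then remains to show $B_\infty\gtrsim|P_{x^{\lfloor\kappa\rfloor-1}\nabla^su_1}|^2$: restricting to $\{|\xi|\leqslant\delta\}$, inserting $\widehat{g}=Q+R$ and using $|Q+R|^2\geqslant\tfrac12|Q|^2-|R|^2$ reduces the claim to $\int_{|\xi|\leqslant\delta}\frac{|Q(\xi)|^2}{|\xi|^{2+2s}}\,\mathrm{d}\xi\approx\big(\int_{|\omega|=1}|Q(\omega)|^2\,\mathrm{d}\sigma_\omega\big)\,\delta^{\,2\lfloor\kappa\rfloor-4-2s+n}$, which is positive and dominates the $R$-contribution (a factor $\delta^2$ smaller) once $\delta$ is fixed small.

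I expect the main obstacle to be exactly this final step, namely controlling the homogeneous polynomial $Q$. One has to know that the angular integral $\int_{|\omega|=1}|Q(\omega)|^2\,\mathrm{d}\sigma_\omega$ is comparable to the Euclidean size of the leading-order moment vector $|P_{x^{\lfloor\kappa\rfloor-1}\nabla^su_1}|$ — a finite-dimensional norm equivalence resting on the linear independence of the monomials $\xi^\alpha$ with $|\alpha|=\lfloor\kappa\rfloor-1$ — and that \eqref{Data-stable} makes it strictly positive. A secondary care point is keeping the Taylor remainder $R$ genuinely of lower order, which needs only $|x|^{\lfloor\kappa\rfloor}g\in L^1$, available from $g\in L^{1,\kappa}$; the oscillatory cancellations in $B(t)$ and $C(t)$ are then routine consequences of Riemann--Lebesgue.
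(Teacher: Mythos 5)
Your proposal is correct, and the upper bound is essentially the paper's argument (Taylor expansion of $\widehat{g}$ using the vanishing moments, with \eqref{Stab-Con} providing square-integrability of $|\xi|^{\lfloor\kappa\rfloor-2-s}$ near the origin). The lower bound, however, takes a genuinely different route. The paper (Proposition \ref{Prop-Stable}, building on Proposition \ref{Prop-Lower}) inserts a Gaussian weight $\mathrm{e}^{-\gamma|\xi|^2}$, shrinks the integration domain to the union of annuli $[\nu_j,\mu_j]$ on which $|\sin(|\xi|t)|\geqslant 1/\sqrt{2}$, and then tunes the parameter $\gamma$ large depending on the data to absorb the remainder and the $u_0$-contribution. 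You instead compute the actual large-time limit: since \eqref{Stab-Con} makes $|\widehat{g}(\xi)|^2|\xi|^{-2-2s}$ integrable (near $0$ by the moment expansion, at infinity since $\widehat g\in L^2$), writing $\sin^2(|\xi|t)=\tfrac12(1-\cos(2|\xi|t))$ and reducing the radial oscillation $\cos(2|\xi|t)$ to a one-dimensional Riemann--Lebesgue statement in polar coordinates gives $B(t)\to B_\infty$ and $C(t)\to 0$, after which the bound $B_\infty\gtrsim|P_{x^{\lfloor\kappa\rfloor-1}\nabla^s u_1}|^2$ is a static low-frequency computation. Your approach is cleaner here and even identifies the limiting value, but note that it works precisely because the stability condition \eqref{Stab-Con} makes the relevant density integrable; it would not transfer to the growth regimes of Proposition \ref{Prop-Lower} (where that integral diverges), which is why the paper keeps the annuli-plus-Gaussian machinery uniform across all cases. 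Two points you correctly flag deserve the care you give them: the Riemann--Lebesgue step needs the polar-coordinate reduction (the oscillation is radial, not a genuine Fourier phase), and the equivalence $\int_{|\omega|=1}|Q(\omega)|^2\,\mathrm{d}\sigma_\omega\approx|P_{x^{\lfloor\kappa\rfloor-1}\nabla^s u_1}|^2$ is a finite-dimensional norm equivalence on homogeneous polynomials of degree $\lfloor\kappa\rfloor-1$; the paper's corresponding pointwise inequality \eqref{Est-06} glosses over exactly this point, so your treatment is if anything more careful. As in the paper (where $\gamma$ in \eqref{Est-03} depends on the data), the time threshold and the implicit constant in your lower bound depend on $u_0$ and $\nabla^s u_1$, which is consistent with the statement.
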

It is easy to observe that the blow-up phenomena in finite time (when $n\in[1,n_0]$) and in infinite time (when $n\in(n_0,n_1]$) from the previous result \eqref{Optimal-Est-Waves} have been eliminated by the neither decay nor growth estimate \eqref{Est-04}, i.e. the $L^2$-stability, in Theorem \ref{Thm-stable} thanks to the stronger assumptions $\nabla^su_1\in L^{1,\kappa}$ and the vanishing moment condition \eqref{Data-stable}.

\begin{remark}
Concerning the classical case $s=0$, in the framework of $\kappa\in\mb{N}_0$ (and, in turn, $\min\kappa=2$), we may take the assumptions $u_0\in L^2$ and $u_1\in L^2\cap L^{1,2}$ with $|P_{u_1}|=0$ but $|P_{xu_1}|\neq0$ when $n\in[1,2]$. Then, we arrive at
\begin{align*}
	|P_{xu_1}|\lesssim \|u(t,\cdot)\|_{L^2}\lesssim \|u_0\|_{L^2}+\|u_1\|_{L^2\cap L^{1,2}},
\end{align*}
which overcomes the instability (optimal growths) results in \cite[Theorems 1.1 and 1.2]{Ikehata=2023} when $n\in[1,2]$. Furthermore, we may claim that in the $L^2\cap L^{1,\kappa}$ framework with $\kappa\in\mb{N}_0$, as $t\gg1$ the optimal regularity of initial data $u_1$ for stabilizing the crucial quantity $\|u(t,\cdot)\|_{L^2}$ to the free wave equation \eqref{Eq-Waves} is $\kappa=1$. To be specific, it is instable when $\kappa\in\{0,1\}$ (see \cite{Ikehata=2023} or Theorem \ref{Thm-L1-data} in details) but stable when $\kappa\in\{2,3,\dots\}$ for large time.
\end{remark} 

\begin{remark}
For the general case $s\geqslant0$, we in Remark \ref{Rem-critical} state that: for $\ml{B}:=\{(n,s):\ 1+s-\frac{n}{2}\in\mb{N}_0\}$, if $u_0\in L^2$ and $\nabla^s u_1\in L^2\cap L^{1,\kappa}$ carrying the limit value $\lfloor\kappa\rfloor=2+s-\frac{n}{2}$, then the solution will blow up in infinite time such that
\begin{align*}
	\|u(t,\cdot)\|_{L^2}\gtrsim \sqrt{\ln t}\,|P_{{x^{\lfloor\kappa\rfloor-1}\nabla^su_1}}|
\end{align*}
for large time $t\gg1$, provided that the same condition \eqref{Data-stable} holds. In other words, without requiring the condition \eqref{Stab-Con}, the quantity is not large time stable in general. In this sense, the last estimate and Theorem \ref{Thm-stable} show that in the framework of $L^2\cap L^{1,\kappa}$ data, the critical regularity (stabilization threshold) of initial data $\nabla^su_1$ to stabilize $\|u(t,\cdot)\|_{L^2}$ for the free wave equation \eqref{Eq-Waves} is 
\begin{align*}
	\lfloor\kappa\rfloor=2+s-\frac{n}{2}\ \ \mbox{when}\ \ n\in[1,2+2s]
\end{align*}
at least for $(n,s)\in\ml{B}$.
\end{remark}

\section{Estimates of solution in the $L^2$ norm}\setcounter{equation}{0}\label{Section-Lemma}
\hspace{5mm}To begin with this section, we put forward the $L^2$ norm for the general kernel via
\begin{align*}
\ml{I}_{n,\sigma,s}(t):=\left\|\widehat{\ml{K}}(t,\xi;\sigma)\,\widehat{v}_0(\xi)+\frac{\sin(|\xi|^{\sigma}t)}{|\xi|^{\sigma+s}}\,\widehat{v}_1(\xi)\right\|_{L^2}^2
\end{align*}
with $\sigma\geqslant 1$ and $s\geqslant 0$, where $|\widehat{\ml{K}}(t,\xi;\sigma)|\lesssim 1$ uniformly in $t,\xi$, and $\widehat{v}_j(\xi)$ denote the functions related to some initial data (that will be chosen later). Note that this kernel includes a large class of fundamental solutions to the undamped evolution equations associated with the oscillation effect (see Subsection \ref{Sub-Sec-Proof} as well as Section \ref{Section-Applications}, for example, $\widehat{\ml{K}}(t,\xi;1)=\cos(|\xi|t)$, $\widehat{v}_0(\xi)=\widehat{u}_0(\xi)$, $\widehat{v}_1(\xi)=|\xi|^s\widehat{u}_1(\xi)$ and $\sigma=1$ in the free wave equation). For this reason, we are going to investigate large time behavior for $\ml{I}_{n,\sigma,s}(t)$ deeply in the next part.

\subsection{Some preliminaries on the crucial kernel}
\hspace{5mm}We focus on the large time upper bound estimate firstly. The proof is motivated by \cite{Ikehata=2023,Ikehata=2024} and based on the Fourier splitting method with suitable time-dependent splitting functions. One may notice that $\ml{D}_{n,\sigma,s}(t)\gtrsim 1$ for large time $t\gg1$, namely, the time-dependent coefficient of $v_1$ in the upper bound estimates will play the determinant role in comparison with the one of $v_0$.
\begin{prop}\label{Prop-Upper}
Let $\sigma\geqslant 1$, $0\leqslant s<\frac{n}{2}$, and $|\widehat{\ml{K}}(t,\xi;\sigma)|\lesssim 1$ uniformly in $t,\xi$. Let us assume $v_0\in L^2$ and $v_1\in L^2\cap L^1$. Then, the time-dependent function $\ml{I}_{n,\sigma,s}(t)$ satisfies the following sharp upper bound estimates:
\begin{align*}
\ml{I}_{n,\sigma,s}(t)\lesssim\|v_0\|_{L^2}^2+[\ml{D}_{n,\sigma,s}(t)]^2\,\|v_1\|_{L^2\cap L^1}^2
\end{align*}
for large time $t\gg1$.
\end{prop}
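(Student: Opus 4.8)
The plan is to estimate $\ml{I}_{n,\sigma,s}(t)$ by first separating the two pieces of data through the $L^2$ triangle inequality together with $(a+b)^2\le 2a^2+2b^2$, which gives
\[
\ml{I}_{n,\sigma,s}(t)\lesssim\big\|\widehat{\ml{K}}(t,\xi;\sigma)\,\widehat{v}_0\big\|_{L^2}^2+\left\|\frac{\sin(|\xi|^{\sigma}t)}{|\xi|^{\sigma+s}}\,\widehat{v}_1\right\|_{L^2}^2 .
\]
Since $|\widehat{\ml{K}}(t,\xi;\sigma)|\lesssim 1$ uniformly in $t,\xi$, Plancherel's identity immediately controls the first term by $\|v_0\|_{L^2}^2$, which is $t$-independent and hence already of the claimed form. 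All the work is therefore concentrated in the oscillatory multiplier acting on $v_1$, and the objective is to show that it produces exactly the growth factor $[\ml{D}_{n,\sigma,s}(t)]^2$.

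For that term I would apply the Fourier splitting method with a fixed cut at $|\xi|=1$. On the outer region $|\xi|>1$ one has $|\xi|^{2\sigma+2s}\ge 1$ and $\sin^2\le 1$, so this piece is bounded by $\int_{|\xi|>1}|\widehat{v}_1|^2\,\mathrm{d}\xi\le\|v_1\|_{L^2}^2$; because $\ml{D}_{n,\sigma,s}(t)\gtrsim 1$ for $t\gg1$, it is absorbed into the right-hand side. On the inner region $|\xi|\le 1$ I would replace $|\widehat{v}_1(\xi)|$ by its uniform bound $\|\widehat{v}_1\|_{L^\infty}\le\|v_1\|_{L^1}$ (Hausdorff--Young), reducing matters to the deterministic radial integral
\[
\int_{|\xi|\le 1}\frac{\sin^2(|\xi|^{\sigma}t)}{|\xi|^{2\sigma+2s}}\,\mathrm{d}\xi=\omega_n\int_0^1\frac{\sin^2(r^{\sigma}t)}{r^{2\sigma+2s-n+1}}\,\mathrm{d}r .
\]

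To measure the size of this integral I would introduce the time-dependent splitting function $r=t^{-1/\sigma}$ (the ``suitable time-dependent splitting function'' indicated in Figure \ref{imggg}), separating the inner region into $0\le r\le t^{-1/\sigma}$ and $t^{-1/\sigma}<r\le 1$. On the first zone I use $\sin^2(r^{\sigma}t)\le r^{2\sigma}t^2$, producing $t^2\int_0^{t^{-1/\sigma}}r^{n-1-2s}\,\mathrm{d}r$, which is finite precisely because $s<\tfrac n2$ and equals a constant multiple of $t^{2-(n-2s)/\sigma}$. On the second zone I use $\sin^2\le 1$ and integrate $\int_{t^{-1/\sigma}}^1 r^{n-1-2\sigma-2s}\,\mathrm{d}r$, whose value is governed by the sign of $n-2\sigma-2s$. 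Collecting the three resulting cases reproduces exactly $[\ml{D}_{n,\sigma,s}(t)]^2$: the polynomial factor $t^{2-(n-2s)/\sigma}$ when $2s<n<2\sigma+2s$, a logarithm $\ln t$ at the borderline $n=2\sigma+2s$, and a bounded contribution when $n>2\sigma+2s$.

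The routine parts are the two elementary reductions (triangle inequality and Hausdorff--Young) and the explicit one-dimensional integrals. The step I expect to require the most care is the choice of the splitting frequency and the case analysis on $n-2\sigma-2s$: one must check that the single cutoff $t^{-1/\sigma}$ simultaneously delivers the sharp rate in all three dimensional regimes, that the near-origin integrability condition $s<\tfrac n2$ is exactly what keeps the first zone finite, and that the critical dimension $n=2\sigma+2s$ yields a logarithm rather than a power. An equivalent, arguably cleaner route would bypass the explicit splitting via the substitution $\tau=r\,t^{1/\sigma}$ in the radial integral, after which the $t$-dependence factors out as $t^{2-(n-2s)/\sigma}$ and the remaining integral $\int_0^{t^{1/\sigma}}\sin^2(\tau^{\sigma})\,\tau^{n-2s-2\sigma-1}\,\mathrm{d}\tau$ is classified by its convergence or divergence as $\tau\to\infty$; I would nonetheless keep the splitting-function formulation to remain consistent with the lower-bound arguments developed later in the paper.
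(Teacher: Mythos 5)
Your proposal is correct and follows the same overall strategy as the paper (separate the two data terms, control the $\widehat{v}_0$ part by Plancherel and the uniform bound on $\widehat{\ml{K}}$, then apply Fourier splitting with the time-dependent threshold $|\xi|=t^{-1/\sigma}$ to the oscillatory multiplier acting on $\widehat{v}_1$, using $|\sin(|\xi|^\sigma t)|\leqslant|\xi|^\sigma t$ below the threshold and $|\sin|\leqslant1$ above it). The one genuine difference lies in how the middle-frequency zone $t^{-1/\sigma}\leqslant|\xi|\leqslant1$ is handled: you bound $|\widehat{v}_1(\xi)|$ by $\|v_1\|_{L^1}$ on the entire unit ball and reduce everything to the explicit radial integral $\int_{t^{-1/\sigma}}^{1}r^{n-1-2\sigma-2s}\,\mathrm{d}r$, whose three regimes (power, logarithm, constant) directly reproduce $[\ml{D}_{n,\sigma,s}(t)]^2$; the paper instead keeps $|\widehat{v}_1(\xi)|^2$ inside the integral on part of this zone and introduces secondary cutoffs, $|\xi|=t^{-1/\alpha_0}$ with $\alpha_0=\frac{\sigma(2\sigma+2s)}{2\sigma+2s-n}$ in the subcritical case and $|\xi|=(\ln t)^{-1/\alpha_1}$ in the critical case, so as to pay with $\|v_1\|_{L^2}^2$ rather than $\|v_1\|_{L^1}^2$ on the outer piece. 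Your single-cutoff version is shorter and delivers exactly the same sharp rates (the borderline $n=2\sigma+2s$ correctly yields $\frac{1}{\sigma}\ln t$, and the condition $s<\frac n2$ is indeed precisely what makes the innermost integral converge), at the cost of attributing slightly more of the bound to the $L^1$ norm; since the statement is phrased in terms of $\|v_1\|_{L^2\cap L^1}$, nothing is lost. Your closing remark about substituting $\tau=r\,t^{1/\sigma}$ is also a valid alternative packaging of the same computation.
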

\begin{proof}
For the higher dimensions $n\in(2\sigma+2s,+\infty)$ it is clear that
\begin{align*}
\ml{I}_{n,\sigma,s}(t)&=\int_{\mb{R}^n}\left|\widehat{\ml{K}}(t,\xi;\sigma)\,\widehat{v}_0(\xi)+\frac{\sin(|\xi|^{\sigma}t)}{|\xi|^{\sigma+s}}\,\widehat{v}_1(\xi)\right|^2\,\mathrm{d}\xi\\
&\lesssim\int_{\mb{R}^n}|\widehat{v}_0(\xi)|^2\,\mathrm{d}\xi+\int_{|\xi|\leqslant 1}|\xi|^{-2\sigma-2s}\,\mathrm{d}\xi\,\|\widehat{v}_1\|_{L^{\infty}}^2+\int_{|\xi|\geqslant 1}|\widehat{v}_1(\xi)|^2\,\mathrm{d}\xi\\
&\lesssim\|v_0\|_{L^2}^2+\|v_1\|_{L^2\cap L^1}^2,
\end{align*}
where we used the boundedness of sine function, $\|\widehat{v}_1\|_{L^{\infty}}\lesssim \|v_1\|_{L^1}$, and $\int_0^1r^{n-1-2\sigma-2s}\,\mathrm{d}r<+\infty$ due to $n-1-2\sigma-2s>-1$.

Indeed, we need detailed discussion in the lower dimensions $n\in(2s, 2\sigma+2s]$ for the part of $\widehat{v}_1(\xi)$. We may introduce the next separation by the time-dependent threshold $t^{-\frac{1}{\sigma}}$:
\begin{align*}
&\int_{\mb{R}^n}\frac{|\sin(|\xi|^{\sigma}t)|^2}{|\xi|^{2\sigma+2s}}\,|\widehat{v}_1(\xi)|^2\,\mathrm{d}\xi\\
&\qquad=\left(\int_{|\xi|\leqslant t^{-\frac{1}{\sigma}}}+\int_{|\xi|\geqslant t^{-\frac{1}{\sigma}}} \right)\frac{|\sin(|\xi|^{\sigma}t)|^2}{|\xi|^{2\sigma+2s}}\,|\widehat{v}_1(\xi)|^2\,\mathrm{d}\xi=:I_n^{(1)}(t)+I_n^{(2)}(t).
\end{align*}
From $|\sin(|\xi|^{\sigma}t)|\leqslant|\xi|^{\sigma}t$ and the polar coordinates with $n\in(2s,2\sigma+2s]$, one finds
\begin{align*}
I_n^{(1)}(t)&\lesssim t^2\|\widehat{v}_1\|_{L^{\infty}}^2\int_{|\xi|\leqslant t^{-\frac{1}{\sigma}}}\left|\frac{\sin(|\xi|^{\sigma}t)}{|\xi|^{\sigma}t}\right|^2|\xi|^{-2s}\,\mathrm{d}\xi\\
&\lesssim t^2\|v_1\|_{L^1}^2\int_0^{t^{-\frac{1}{\sigma}}}r^{n-1-2s}\,\mathrm{d}r\\
&\lesssim t^{2-\frac{n-2s}{\sigma}}\|v_1\|_{L^1}^2.
\end{align*}

For another, a further decomposition with the refined threshold $t^{-\frac{1}{\alpha_0}}$ carrying $\alpha_0:=\frac{\sigma(2\sigma+2s)}{2\sigma+2s-n}>\sigma$ for the sub-critical dimensions $n\in(2s,2\sigma+2s)$ implies
\begin{align*}
I_n^{(2)}(t)&=\left(\int_{t^{-\frac{1}{\sigma}}\leqslant |\xi|\leqslant t^{-\frac{1}{\alpha_0}}}+\int_{|\xi|\geqslant t^{-\frac{1}{\alpha_0}}}\right)\frac{|\sin(|\xi|^{\sigma}t)|^2}{|\xi|^{2\sigma+2s}}\,|\widehat{v}_1(\xi)|^2\,\mathrm{d}\xi\\
&\lesssim \int_{t^{-\frac{1}{\sigma}}\leqslant|\xi|\leqslant t^{-\frac{1}{\alpha_0}}}|\xi|^{-2\sigma-2s}\,\mathrm{d}\xi\,\|v_1\|_{L^1}^2+t^{\frac{2\sigma+2s}{\alpha_0}}\int_{|\xi|\geqslant t^{-\frac{1}{\alpha_0}}}|\sin(|\xi|^{\sigma}t)|^2\,|\widehat{v}_1(\xi)|^2\,\mathrm{d}\xi\\
&\lesssim \int_{t^{-\frac{1}{\sigma}}}^{t^{-\frac{1}{\alpha_0}}}r^{n-1-2\sigma-2s}\,\mathrm{d}r\,\|v_1\|_{L^1}^2+t^{\frac{2\sigma+2s}{\alpha_0}}\|v_1\|_{L^2}^2\\
&\lesssim \left(t^{-\frac{n-2\sigma-2s}{\sigma}}-t^{-\frac{n-2\sigma-2s}{\alpha_0}}\right)\|v_1\|_{L^1}^2+t^{-\frac{n-2\sigma-2s}{\sigma}}\|v_1\|_{L^2}^2\\
&\lesssim t^{2-\frac{n-2s}{\sigma}}\|v_1\|_{L^2\cap L^1}^2.
\end{align*}
 Consequently, it follows
\begin{align*}
\ml{I}_{n,\sigma,s}(t)\lesssim \|v_0\|_{L^2}^2+t^{2-\frac{n-2s}{\sigma}}\|v_1\|_{L^2\cap L^1}^2
\end{align*}
for any $n\in(2s,2\sigma+2s)$.

\begin{figure}[http]
	\centering
	\begin{tikzpicture}
		\draw[->] (-0.2,0) -- (5.8,0) node[below] {$|\xi|$};
		\draw[->] (0,-0.2) -- (0,4.4) node[left] {$t$};
		\node[left] at (0,-0.2) {{$0$}};
		\draw[color=black] plot[smooth, tension=.7] coordinates {(0.5,4) (0.7,2.04) (0.9,1.23) (1.1,0.82) (1.3,0.59) (1.6,0.39) (2,0.25) (2.5,0.16) (3,0.11) (3.5,0.08) (4,0.0625) (5,0.04)};
		\draw[color=black] plot[smooth, tension=.7] coordinates {(0.76,4) (0.8,3.05) (0.9,1.69) (1,1)};
		\node[left, color=black] at (4.4,0.4) {{$\longleftarrow$ $|\xi|=t^{-\frac{1}{\sigma}}$}};
		\node[left, color=black] at (3.4,3.6) {{$\longleftarrow$ $|\xi|=t^{-\frac{1}{\alpha_0}}$}};
		\node[left, color=black] at (2.6,2) {{III}};
		\node[left, color=black] at (0.9,3.8) {{II}};
		\node[left, color=black] at (0.6,0.6) {{I}};
		\node[left] at (6,-1) {{Separating lines when $n\in(2s,2\sigma+2s)$}};
		\draw[->] (7.8,0) -- (13.8,0) node[below] {$|\xi|$};
		\draw[->] (8,-0.2) -- (8,4.4) node[left] {$t$};
		\node[left] at (8,-0.2) {{$0$}};
		\draw[color=black] plot[smooth, tension=.7] coordinates {(8.5,4) (8.7,2.04) (8.9,1.23) (9.1,0.82) (9.3,0.59) (9.6,0.39) (10,0.25) (10.5,0.16) (11,0.11) (11.5,0.08) (12,0.0625) (13,0.04)};
		\draw[color=black] plot[smooth, tension=.7] coordinates {(8.94,4) (9,2.72) (9.1,1.86) (9.2,1.49) (9.3,1.31) (9.4,1.2) (9.5,1.14) (9.8,1.05) (10,1.03) (10.5,1.01) (12,1) (13,1)};
		\node[left, color=black] at (12.2,3.6) {{$\longleftarrow$ $|\xi|=(\ln t)^{-\frac{1}{\alpha_1}}$}};
		\node[left, color=black] at (12.4,0.4) {{$\longleftarrow$ $|\xi|=t^{-\frac{1}{\sigma}}$}};
        \node[left, color=black] at (11.8,2) {{III'}};
		\node[left, color=black] at (10.8,0.7) {{II'}};
		\node[left, color=black] at (8.6,0.6) {{I}};
		\node[left] at (14,-1) {{Separating lines when $n=2\sigma+2s$}};
	\end{tikzpicture}
	\caption{Different separating lines in lower dimensions}
	\label{imggg}
\end{figure}
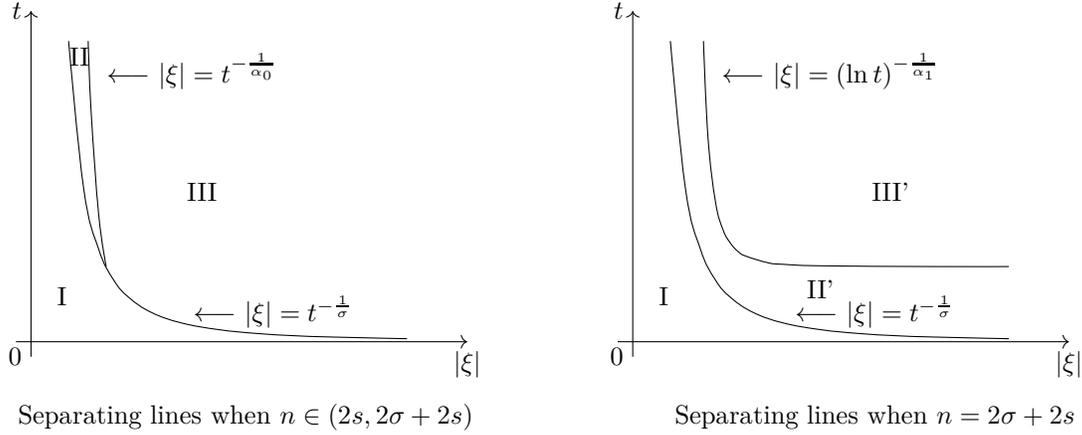
Finally, for the critical dimension $n=2\sigma+2s$ we have to improve the last estimate due to the power $(2-\frac{n-2s}{\sigma})|_{n=2\sigma+2s}=0$ for $I_n^{(2)}(t)$ by the Fourier splitting method with a new separation (see the separating lines in Figure \ref{imggg}), to be specific,
\begin{align*}
I_n^{(2)}(t)&=\left(\int_{t^{-\frac{1}{\sigma}}\leqslant |\xi|\leqslant (\ln t)^{-\frac{1}{\alpha_1}}}+\int_{|\xi|\geqslant (\ln t)^{-\frac{1}{\alpha_1}}}\right)\frac{|\sin(|\xi|^{\sigma}t)|^2}{|\xi|^{2\sigma+2s}}\,|\widehat{v}_1(\xi)|^2\,\mathrm{d}\xi\\
&\lesssim\int_{t^{-\frac{1}{\sigma}}}^{(\ln t)^{-\frac{1}{\alpha_1}}}r^{n-1-2\sigma-2s}\,\mathrm{d}r\,\|v_1\|_{L^1}^2+\int_{|\xi|\geqslant(\ln t)^{-\frac{1}{\alpha_1}}}|\xi|^{-2\sigma-2s}|\widehat{v}_1(\xi)|^2\,\mathrm{d}\xi\\
&\lesssim(\ln t-\ln\ln t)\|v_1\|_{L^1}^2+(\ln t)^{\frac{2\sigma+2s}{\alpha_1}}\|v_1\|_{L^2}^2\\
&\lesssim \ln t\,\|v_1\|_{L^2\cap L^1}^2
\end{align*}
for large time $t\gg1$ with the positive constant $\alpha_1:=2\sigma+2s\geqslant2\sigma>0$, which turns into 
\begin{align*}
\ml{I}_{n,\sigma,s}(t)\lesssim \|v_0\|_{L^2}^2+\ln t\,\|v_1\|_{L^2\cap L^1}^2
\end{align*}
for $n=2\sigma+2s$. The proof is completed.
\end{proof}

To ensure the large time optimality of derived estimates in the last proposition when $0\leqslant s<\frac{n}{2}$, we next study their lower bounds. We underline that the bounded lower bounds for the higher dimensions $n\in(2\sigma+2s,+\infty)$ do not been considered even in the simplest case $\sigma=1$ and $s=0$. Different from the exponential (in time) decay kernel of strongly damped waves \cite{Ikehata=2014,Ikehata-Ono=2017} or the growth (in time) functions in lower dimensions of the free waves \cite{Ikehata=2023}, we are going to introduce a parameter$(\gamma)$-dependent exponential function $\mathrm{e}^{-\gamma|\xi|^2}$ to control the remainder terms by suitable large $\gamma>0$. Then, associated with the suitable shrinking domains, it allows us to obtain the boundedness of $\ml{I}_{n,\sigma,s}(t)$ from the below side.
\begin{prop}\label{Prop-Lower}
Let $\sigma\geqslant 1$, $0\leqslant s<\frac{n}{2}$, and $|\widehat{\ml{K}}(t,\xi;\sigma)|\lesssim 1$ uniformly in $t,\xi$. Let us assume $v_0\in L^2\cap L^1$ and $v_1\in L^{1,1}$ with $|P_{v_1}|\neq0$. Then, the time-dependent function $\ml{I}_{n,\sigma,s}(t)$ satisfies the following lower bound estimates:
\begin{align*}
	\ml{I}_{n,\sigma,s}(t)\gtrsim [\ml{D}_{n,\sigma,s}(t)]^2\,|P_{v_1}|^2
\end{align*}
for large time $t\gg1$.
\end{prop}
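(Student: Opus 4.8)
The plan is to bound $\ml{I}_{n,\sigma,s}(t)$ from below by inserting a Gaussian weight $\mathrm{e}^{-\gamma|\xi|^2}$ and then separating the genuine $v_1$-contribution from the $v_0$- and cross-terms. Write $A(\xi):=\widehat{\ml{K}}(t,\xi;\sigma)\,\widehat{v}_0(\xi)$ and $B(\xi):=\frac{\sin(|\xi|^\sigma t)}{|\xi|^{\sigma+s}}\,\widehat{v}_1(\xi)$. First I would use $\mathrm{e}^{-\gamma|\xi|^2}\leqslant 1$ together with the pointwise inequality $|A+B|^2\geqslant\frac12|B|^2-|A|^2$ (from $2\,\mathrm{Re}(B\overline{A})\geqslant-\frac12|B|^2-2|A|^2$) to obtain, for every $\gamma>0$,
\[
\ml{I}_{n,\sigma,s}(t)\geqslant\int_{\mb{R}^n}|A+B|^2\mathrm{e}^{-\gamma|\xi|^2}\,\mathrm{d}\xi\geqslant\frac12\int_{\mb{R}^n}|B|^2\mathrm{e}^{-\gamma|\xi|^2}\,\mathrm{d}\xi-\int_{\mb{R}^n}|A|^2\mathrm{e}^{-\gamma|\xi|^2}\,\mathrm{d}\xi.
\]
The point of $\gamma$ is that the two remaining integrals carry different powers of $\gamma$, so a single large (but $t$-independent) choice will make the $A$-integral negligible relative to the main term.

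Second, I would localise the $B$-integral to a fixed small ball. Since $v_1\in L^{1,1}$, the transform is Lipschitz at the origin, $|\widehat{v}_1(\xi)-\widehat{v}_1(0)|\leqslant|\xi|\int_{\mb{R}^n}|x|\,|v_1(x)|\,\mathrm{d}x\lesssim|\xi|\,\|v_1\|_{L^{1,1}}$, where $\widehat{v}_1(0)$ equals $P_{v_1}$ up to the fixed Fourier normalisation constant and $P_{v_1}\neq0$; hence there is a $t$-independent $\delta>0$ with $|\widehat{v}_1(\xi)|\geqslant\frac12|\widehat{v}_1(0)|$ for $|\xi|\leqslant\delta$. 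Dropping the complement gives
\[
\int_{\mb{R}^n}|B|^2\mathrm{e}^{-\gamma|\xi|^2}\,\mathrm{d}\xi\gtrsim|P_{v_1}|^2\int_{|\xi|\leqslant\delta}\frac{|\sin(|\xi|^\sigma t)|^2}{|\xi|^{2\sigma+2s}}\mathrm{e}^{-\gamma|\xi|^2}\,\mathrm{d}\xi,
\]
and the core computation is to show this last integral is $\gtrsim[\ml{D}_{n,\sigma,s}(t)]^2$ for $t\gg1$, carried out in polar coordinates in three regimes: for $n<2\sigma+2s$ the bulk comes from $|\xi|\lesssim t^{-1/\sigma}$, where $\sin(|\xi|^\sigma t)\approx|\xi|^\sigma t$ and $\mathrm{e}^{-\gamma|\xi|^2}\approx1$, yielding $t^{2-(n-2s)/\sigma}$; for $n=2\sigma+2s$ the band $t^{-1/\sigma}\leqslant|\xi|\leqslant\delta$ produces $\ln t$ after writing $|\sin|^2=\frac12(1-\cos(2|\xi|^\sigma t))$ and discarding the oscillatory part by Riemann--Lebesgue; for $n>2\sigma+2s$ the kernel $|\xi|^{-2\sigma-2s}\mathrm{e}^{-\gamma|\xi|^2}\mathbf{1}_{|\xi|\leqslant\delta}$ is integrable, so the same averaging gives a positive constant.

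Third, I would control the error by the crude bound $\int_{\mb{R}^n}|A|^2\mathrm{e}^{-\gamma|\xi|^2}\,\mathrm{d}\xi\lesssim\|\widehat{v}_0\|_{L^\infty}^2\int_{\mb{R}^n}\mathrm{e}^{-\gamma|\xi|^2}\,\mathrm{d}\xi\lesssim\|v_0\|_{L^1}^2\,\gamma^{-n/2}$, using only $|\widehat{\ml{K}}|\lesssim1$ and $v_0\in L^1$. The rescaling $\eta=\sqrt{\gamma}\,\xi$ shows the main ball-integral behaves like $\gamma^{-(n-2\sigma-2s)/2}$ in the supercritical regime, so the ratio of the $A$-term to the $B$-term scales like $\gamma^{-(\sigma+s)}\to0$; choosing $\gamma$ large, depending only on $\|v_0\|_{L^1}$ and $|P_{v_1}|$, forces $\int|A|^2\mathrm{e}^{-\gamma|\xi|^2}\,\mathrm{d}\xi\leqslant\frac14\int|B|^2\mathrm{e}^{-\gamma|\xi|^2}\,\mathrm{d}\xi$ and delivers $\ml{I}_{n,\sigma,s}(t)\gtrsim[\ml{D}_{n,\sigma,s}(t)]^2|P_{v_1}|^2$. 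In the sub-critical and critical regimes the $B$-integral already diverges in $t$ while the $A$-integral stays a fixed constant, so the same $\gamma$ works automatically for $t\gg1$.

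The main obstacle is exactly the supercritical case $n>2\sigma+2s$, where $\ml{D}_{n,\sigma,s}(t)\equiv1$ and the unwanted $v_0$-contribution is itself an $O(1)$ constant that a naive subtraction cannot absorb; the Gaussian weight with large $\gamma$ is what breaks the tie, since near the origin the $v_1$-kernel $|\xi|^{-2\sigma-2s}$ is strictly more singular than the effective $v_0$-kernel, so concentrating the mass at the origin makes the main term dominate. A secondary technical point is to keep both the Lipschitz radius $\delta$ and the parameter $\gamma$ fixed independently of $t$, so that every suppressed constant is genuinely time-independent.
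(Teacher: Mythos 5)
Your proposal is correct in substance and shares the paper's overall architecture: the pointwise inequality $|A+B|^2\geqslant\tfrac12|B|^2-|A|^2$, the insertion of the Gaussian weight $\mathrm{e}^{-\gamma|\xi|^2}$ with a large but $t$-independent $\gamma$ chosen so that the $v_0$-contribution $\lesssim\|v_0\|_{L^1}^2\,\gamma^{-n/2}$ is beaten by the main term $\gtrsim|P_{v_1}|^2\,\gamma^{-(n-2\sigma-2s)/2}$, and the three-regime case analysis. Where you genuinely diverge is in two technical ingredients. First, you localize to a fixed ball on which $|\widehat{v}_1(\xi)|\geqslant\tfrac12|P_{v_1}|$ via the Lipschitz bound, whereas the paper keeps the expansion $|\widehat{v}_1(\xi)|^2\gtrsim|P_{v_1}|^2-|\xi|^2\|v_1\|_{L^{1,1}}^2$ and estimates the correction integral separately; yours is cleaner and removes one error term. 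Second, and more substantially, to extract the positive mass of $|\sin(|\xi|^{\sigma}t)|^2$ in the critical and supercritical regimes the paper restricts to the explicit bands $\nu_j\leqslant|\xi|\leqslant\mu_j$ on which $|\sin(|\xi|^{\sigma}t)|\geqslant1/\sqrt{2}$ and then uses monotonicity of the radial integrand to recover a fixed fraction of the full integral, while you average via $|\sin|^2=\tfrac12\left(1-\cos(2|\xi|^{\sigma}t)\right)$ and discard the oscillatory part. Your averaging is more elementary, but the justification needs one repair: in the critical case $n=2\sigma+2s$ the weight $r^{-1}\mathrm{e}^{-\gamma r^2}$ on $[t^{-1/\sigma},\delta]$ has $L^1$-norm growing like $\ln t$, so the Riemann--Lebesgue lemma does not apply directly; instead substitute $u=2r^{\sigma}t$ to reduce the oscillatory term to $\tfrac{1}{2\sigma}\int_2^{2\delta^{\sigma}t}u^{-1}\cos u\,\mathrm{d}u$, which is bounded uniformly in $t$ by the Dirichlet test (second mean value theorem), and this $O(1)$ remainder is harmless against the $\tfrac{1}{2\sigma}\ln t$ main part. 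With that correction (Riemann--Lebesgue is legitimate in the supercritical regime, where the radial weight is genuinely integrable near the origin), and keeping the quantifier order you already state --- fix $\gamma$ from the data first, then let $t\to+\infty$ --- the argument goes through and yields the same conclusion as Proposition \ref{Prop-Lower}.
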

\begin{remark}
We need $v_0\in L^2$ only as usual in the lower dimensions $n\in(2s,2\sigma+2s]$. For the higher dimensions we technically require $v_0\in L^1$.
\end{remark}

\begin{proof}
Let us consider the lower dimensional case $n\in(2s,2\sigma+2s)$. It is known from the proof of Proposition \ref{Prop-Upper}, the zone-I (see Figure \ref{imggg}) has the dominant influence for large time. By using the triangle inequality $|f-g|^2\geqslant\frac{1}{2}\,|f|^2-|g|^2$ and shrinking the domain of integration from $\mb{R}^n$ to $\{|\xi|\leqslant \delta_0 t^{-\frac{1}{\sigma}}\}$ with a suitably small constant $\delta_0>0$, it may follow that
\begin{align*}
\ml{I}_{n,\sigma,s}(t)&\geqslant\frac{1}{2}\int_{|\xi|\leqslant\delta_0t^{-\frac{1}{\sigma}}}\frac{|\sin(|\xi|^{\sigma}t)|^2}{|\xi|^{2\sigma+2s}}\,|\widehat{v}_1(\xi)|^2\,\mathrm{d}\xi-\int_{|\xi|\leqslant \delta_0t^{-\frac{1}{\sigma}}}|\widehat{v}_0(\xi)|^2\,\mathrm{d}\xi\\
&\gtrsim|P_{v_1}|^2\int_{|\xi|\leqslant\delta_0 t^{-\frac{1}{\sigma}}}\frac{|\sin(|\xi|^{\sigma}t)|^2}{|\xi|^{2\sigma+2s}}\,\mathrm{d}\xi-\|v_1\|_{L^{1,1}}^2\int_{|\xi|\leqslant\delta_0 t^{-\frac{1}{\sigma}}}\frac{|\sin(|\xi|^{\sigma}t)|^2}{|\xi|^{2\sigma+2s-2}}\,\mathrm{d}\xi-\|v_0\|_{L^2}^2,
\end{align*}
where thanks to $v_1\in L^{1,1}$ and $|P_{v_1}|\neq0$ we used (from \cite[Lemma 3.1]{Ikehata=2004})
\begin{align}\label{Ineq-01}
|\widehat{v}_1(\xi)|^2&=|P_{v_1}+\widehat{v}_1(\xi)-P_{v_1}|^2\geqslant\frac{1}{2}|P_{v_1}|^2-|\widehat{v}_1(\xi)-P_{v_1}|^2\notag\\
&\gtrsim|P_{v_1}|^2-|\xi|^2\|v_1\|_{L^{1,1}}^2.
\end{align}
For one thing, by using the fact that
\begin{align*}
\left|\frac{\sin(|\xi|^{\sigma}t)}{|\xi|^{\sigma}t}\right|\geqslant\frac{1}{\sqrt{2}}\ \ \mbox{when}\ \ |\xi|^{\sigma}t\leqslant \delta_0^{\sigma}\ll 1,
\end{align*}
 the lower bound can be controlled by
\begin{align*}
\int_{|\xi|\leqslant \delta_0 t^{-\frac{1}{\sigma}}}\frac{|\sin(|\xi|^{\sigma}t)|^2}{|\xi|^{2\sigma+2s}}\,\mathrm{d}\xi&\gtrsim t^2\int_{|\xi|\leqslant \delta_0 t^{-\frac{1}{\sigma}}}|\xi|^{-2s}\,\mathrm{d}\xi\\
&\gtrsim t^2\int_0^{\delta_0t^{-\frac{1}{\sigma}}}r^{n-1-2s}\,\mathrm{d}r\gtrsim t^{2-\frac{n-2s}{\sigma}}
\end{align*}
due to $n-2s>0$.
For another, the same philosophy indicates
\begin{align*}
\int_{|\xi|\leqslant \delta_0 t^{-\frac{1}{\sigma}}}\frac{|\sin(|\xi|^{\sigma}t)|^2}{|\xi|^{2\sigma+2s-2}}\,\mathrm{d}\xi\lesssim t^2\int_{|\xi|\leqslant \delta_0t^{-\frac{1}{\sigma}}}|\xi|^{2-2s}\,\mathrm{d}\xi\lesssim t^{2-\frac{n+2-2s}{\sigma}},
\end{align*}
which demonstrates immediately
\begin{align*}
\ml{I}_{n,\sigma,s}(t)\gtrsim t^{2-\frac{n-2s}{\sigma}}\left(|P_{v_1}|^2-t^{-\frac{2}{\sigma}}\|v_1\|_{L^{1,1}}^2\right)-\|v_0\|_{L^2}^2\gtrsim t^{2-\frac{n-2s}{\sigma}}|P_{v_1}|^2
\end{align*}
for large time $t\gg1$, provided that $|P_{v_1}|\neq0$, where we recall that $2-\frac{n-2s}{\sigma}>0$ is the optimal growth rate for $n\in(2s,2\sigma+2s)$.

Next, we turn to the critical dimensional case $n=2\sigma+2s$. Thanks to $1\geqslant \mathrm{e}^{-|\xi|^2}$ for any $\xi\in\mb{R}^n$, by the triangle inequality as the lower dimensions, one can get
\begin{align}
\ml{I}_{n,\sigma,s}(t)&\geqslant\frac{1}{2}\int_{\mb{R}^n}\mathrm{e}^{-|\xi|^2}\,\frac{|\sin(|\xi|^{\sigma}t)|^2}{|\xi|^{2\sigma+2s}}\,|\widehat{v}_1(\xi)|^2\,\mathrm{d}\xi-\|v_0\|_{L^2}^2\notag\\
&\gtrsim|P_{v_1}|^2\int_{\mb{R}^n}\mathrm{e}^{-|\xi|^2}\,\frac{|\sin(|\xi|^{\sigma}t)|^2}{|\xi|^{2\sigma+2s}}\,\mathrm{d}\xi-\|v_1\|_{L^{1,1}}^2\int_{\mb{R}^n}\mathrm{e}^{-|\xi|^2}\,\frac{|\sin(|\xi|^{\sigma}t)|^2}{|\xi|^{2\sigma+2s-2}}\,\mathrm{d}\xi-\|v_0\|_{L^2}^2.\label{Est-05}
\end{align}
Clearly, with the aid of $n=2\sigma+2s$, it holds
\begin{align*}
\int_{\mb{R}^n}\mathrm{e}^{-|\xi|^2}\,\frac{|\sin(|\xi|^{\sigma}t)|^2}{|\xi|^{2\sigma+2s-2}}\,\mathrm{d}\xi&\lesssim\int_0^{+\infty}\mathrm{e}^{-r^2}\,|\sin(r^{\sigma}t)|^2\,r^{n+1-2\sigma-2s}\,\mathrm{d}r\\
&\lesssim\int_0^{+\infty}\mathrm{e}^{-r^2}\,r\,\mathrm{d}r<+\infty.
\end{align*}
Let us introduce the $j$-dependent sequences (motivated by \cite{Ikehata-Ono=2017,Ikehata=2023,Ikehata=2024})
\begin{align*}
\nu_j:=\left[\left(\frac{1}{4}+j\right)\frac{\pi}{t}\right]^{\frac{1}{\sigma}}\ \ \mbox{and}\ \ \mu_j:=\left[\left(\frac{3}{4}+j\right)\frac{\pi}{t}\right]^{\frac{1}{\sigma}} \ \ \mbox{for any}\ \ j\in\mb{N}_0,
\end{align*}
where their error (the length of $[\nu_j,\mu_j]$), i.e.
\begin{align}\label{Seq-02}
\mu_j-\nu_j=\left(\frac{\pi}{t}\right)^{\frac{1}{\sigma}}\left[\left(\frac{3}{4}+j\right)^{\frac{1}{\sigma}}-\left(\frac{1}{4}+j\right)^{\frac{1}{\sigma}}\right]=\frac{1}{2\sigma}\left(\frac{\pi}{t}\right)^{\frac{1}{\sigma}}(\theta_1+j)^{\frac{1}{\sigma}-1}
\end{align}
with $\theta_1\in(\frac{1}{4},\frac{3}{4})$ by the Lagrange mean value theorem, is a non-increasing function with respect to $j$ because of $\sigma\geqslant 1$. Note that the length is fixed if $\sigma=1$ and decays to zero if $\sigma>1$.
Then, for any $|\xi|\in[\nu_j,\mu_j]$ with $j\in\mb{N}_0$, namely, $(\frac{1}{4}+j)\pi\leqslant |\xi|^{\sigma}t\leqslant(\frac{3}{4}+j)\pi$, it yields
\begin{align}\label{Treatment-sine}
1\geqslant|\sin(|\xi|^{\sigma}t)|\geqslant\frac{1}{\sqrt{2}}.
\end{align}
By shrinking the domain of integration from $\mb{R}^n$ to $\cup_{j=0}^{+\infty}\,[\nu_j,\mu_j]$ and using the last lower bound estimate, one may deduce
\begin{align}\label{Est-07}
\int_{\mb{R}^n}\mathrm{e}^{-|\xi|^2}\,\frac{|\sin(|\xi|^{\sigma}t)|^2}{|\xi|^{2\sigma+2s}}\,\mathrm{d}\xi
&\geqslant\sum\limits_{j=0}^{+\infty}\int_{\nu_j\leqslant |\xi|\leqslant\mu_j}\mathrm{e}^{-|\xi|^2}\left(\frac{1}{\sqrt{2}}\right)^2|\xi|^{-2\sigma-2s}\,\mathrm{d}\xi\notag\\
&\geqslant\frac{\omega_n}{2}\sum\limits_{j=0}^{+\infty}\int_{\nu_j}^{\mu_j}\mathrm{e}^{-r^2}\,r^{-1}\,\mathrm{d}r\notag\\
&\geqslant \frac{\omega_n}{4}\int_{\nu_0}^{+\infty}\mathrm{e}^{-r^2}\,r^{-1}\,\mathrm{d}r.
\end{align}
In the last line, we employed the monotone decreasing property of integrand $\mathrm{e}^{-r^2}\,r^{-1}$ and the fixed ($\sigma=1$) or decay ($\sigma>1$) length of $[\nu_j,\mu_j]$ arising from \eqref{Seq-02} so that
\begin{align*}
\int_{\nu_j}^{\mu_j}\mathrm{e}^{-r^2}\,r^{-1}\,\mathrm{d}r\geqslant\frac{1}{2}\int_{\nu_j}^{\nu_{j+1}}\mathrm{e}^{-r^2}\,r^{-1}\,\mathrm{d}r\ \ \mbox{for any}\ \ j\in\mb{N}_0.
\end{align*}
 By shrinking the domain again, one notices
\begin{align*}
\frac{\omega_n}{4}\int_{\nu_0}^{+\infty}\mathrm{e}^{-r^2}\,r^{-1}\,\mathrm{d}r&\geqslant\frac{\omega_n}{4}\int_{(\frac{\pi}{4})^{\frac{1}{\sigma}}t^{-\frac{1}{\sigma}}}^1\mathrm{e}^{-r^2}\,r^{-1}\,\mathrm{d}r\\
&\geqslant\frac{\omega_n}{4\sigma\mathrm{e}}\left(\ln\frac{4}{\pi}+\ln t\right)\gtrsim \ln t
\end{align*}
for large time $t\gg1$. In conclusion, the sharp large time lower bound estimate
\begin{align*}
\ml{I}_{n,\sigma,s}(t)\gtrsim\ln t\,|P_{v_1}|^2-\|(v_0,v_1)\|_{L^2\times L^{1,1}}^2\gtrsim \ln t\,|P_{v_1}|^2
\end{align*}
holds for $n=2\sigma+2s$, which is the optimal growth rate comparing with the one in Proposition \ref{Prop-Upper} thanks to $|P_{v_1}|\neq0$.

Eventually, we study the higher dimensional case $n\in(2\sigma+2s,+\infty)$, in which we cannot expect any $L^2$-decay estimate due to the lack of damping mechanism in the free waves (different from the strongly damped waves \cite{Ikehata=2014,Ikehata-Ono=2017}). We now introduce the exponential factor according to 
\begin{align*}
1\geqslant \mathrm{e}^{-\gamma|\xi|^2}\ \ \mbox{for any}\ \ \xi\in\mb{R}^n
\end{align*}
 with a positive (suitably large) parameter $\gamma>0$ to be chosen later. Precisely, the triangle inequality implies
\begin{align}\label{Est-01}
\ml{I}_{n,\sigma,s}(t)&\geqslant\int_{\mb{R}^n}\mathrm{e}^{-\gamma|\xi|^2}\left|\widehat{\ml{K}}(t,\xi;\sigma)\,\widehat{v}_0(\xi)+\frac{\sin(|\xi|^{\sigma}t)}{|\xi|^{\sigma+s}}\,\widehat{v}_1(\xi)\right|^2\,\mathrm{d}\xi\notag\\
&\geqslant\frac{1}{2}\int_{\mb{R}^n}\mathrm{e}^{-\gamma|\xi|^2}\,\frac{|\sin(|\xi|^{\sigma}t)|^2}{|\xi|^{2\sigma+2s}}\,|\widehat{v}_1(\xi)|^2\,\mathrm{d}\xi-\int_{\mb{R}^n}\mathrm{e}^{-\gamma|\xi|^2}\,|\widehat{\ml{K}}(t,\xi;\sigma)|^2\,|\widehat{v}_0(\xi)|^2\,\mathrm{d}\xi\notag\\
&=:I_n^{(3)}(t;\gamma)+I_n^{(4)}(t;\gamma).
\end{align}
By our assumption on $\widehat{\ml{K}}(t,\xi;\sigma)$ there exists a positive constant $C_0=C_0(n,\sigma)$ but independent of $\gamma$ and $t$ such that
\begin{align*}
I_n^{(4)}(t;\gamma)\leqslant C_0\int_{\mb{R}^n}\mathrm{e}^{-\gamma|\xi|^2}\,\mathrm{d}\xi\,\|v_0\|_{L^1}^2&\leqslant C_0\,\omega_n\|v_0\|_{L^1}^2\int_0^{+\infty}\mathrm{e}^{-\gamma r^2}\,r^{n-1}\,\mathrm{d}r\\
&\leqslant \left(C_0\,\omega_n\|v_0\|_{L^1}^2\int_0^{+\infty}\mathrm{e}^{-\eta^2}\,\eta^{n-1}\,\mathrm{d}\eta\right)\gamma^{-\frac{n}{2}}.
\end{align*}
Actually, from \eqref{Ineq-01} there exists a positive constant $M$ such that
\begin{align*}
I_n^{(3)}(t;\gamma)&\geqslant\frac{|P_{v_1}|^2}{4}\int_{\mb{R}^n}\mathrm{e}^{-\gamma|\xi|^2}\,\frac{|\sin(|\xi|^{\sigma}t)|^2}{|\xi|^{2\sigma+2s}}\,\mathrm{d}\xi-\frac{M^2}{2}\,\|v_1\|_{L^{1,1}}^2\int_{\mb{R}^n}\mathrm{e}^{-\gamma|\xi|^2}\,\frac{|\sin(|\xi|^{\sigma}t)|^2}{|\xi|^{2\sigma+2s-2}}\,\mathrm{d}\xi\\
&\geqslant\frac{|P_{v_1}|^2}{4}\sum\limits_{j=0}^{+\infty}\int_{\nu_j\leqslant |\xi|\leqslant\mu_j}\mathrm{e}^{-\gamma|\xi|^2}\left(\frac{1}{\sqrt{2}}\right)^2 |\xi|^{-2\sigma-2s}\,\mathrm{d}\xi-\frac{M^2}{2}\,\|v_1\|_{L^{1,1}}^2\int_{\mb{R}^n}\mathrm{e}^{-\gamma|\xi|^2}\,|\xi|^{2-2\sigma-2s}\,\mathrm{d}\xi,
\end{align*}
where we used two different treatments of sine function via \eqref{Treatment-sine}.
Note that
\begin{align*}
\int_0^{+\infty}\mathrm{e}^{-\gamma r^2}\,r^{n+1-2\sigma-2s}\,\mathrm{d}r=\gamma^{-\frac{n+2-2\sigma-2s}{2}}\int_0^{+\infty}\mathrm{e}^{-\eta^2}\,\eta^{n+1-2\sigma-2s}\,\mathrm{d}\eta.
\end{align*}
Moreover, thanks to
\begin{align*}
\frac{\mathrm{d}}{\mathrm{d}r}\left(\mathrm{e}^{-\gamma r^2}\,r^{n-1-2\sigma-2s}\right)=\mathrm{e}^{-\gamma r^2}\,r^{n-2-2\sigma-2s}\big((n-1-2\sigma-2s)-2\gamma r^2\big),
\end{align*}
that is to say, when
\begin{align*}
r>\sqrt{\frac{\max\{n-1-2\sigma-2s,0\}}{2\gamma}}=:\theta_0=\theta_0(n,\sigma,s,\gamma),
\end{align*}
 we claim that the last derivative is negative and therefore $\mathrm{e}^{-\gamma r^2}\,r^{n-1-2\sigma-2s}$ is a monotone decreasing function for large $r$. Let us take the index $j_1=j_1(t,n,\sigma,s,\gamma)\in\mb{N}_0$ via
\begin{align*}
j_1:=\left\lceil\theta_0^{\sigma}\frac{t}{\pi}-\frac{1}{4}\right\rceil\ \ \Rightarrow \ \ \nu_{j_1}=\left[\left(\frac{1}{4}+j_1\right)\frac{\pi}{t}\right]^{\frac{1}{\sigma}}\geqslant\theta_0.
\end{align*}
Therefore, it follows
\begin{align}\label{Est-02}
\sum\limits_{j=0}^{+\infty}\int_{\nu_j\leqslant |\xi|\leqslant\mu_j}\mathrm{e}^{-\gamma|\xi|^2}|\xi|^{-2\sigma-2s}\,\mathrm{d}\xi
&\geqslant \omega_n \sum\limits_{j=j_1}^{+\infty}\int_{\nu_j}^{\mu_j}\mathrm{e}^{-\gamma r^2}\,r^{n-1-2\sigma-2s}\,\mathrm{d}r\notag\\
&\geqslant\frac{\omega_n}{2}\int_{\nu_{j_1}}^{+\infty}\mathrm{e}^{-\gamma r^2}\,r^{n-1-2\sigma-2s}\,\mathrm{d}r=:I_n^{(5)}(t;\gamma),
\end{align}
where we used \eqref{Seq-02} again and the non-increasing property for $\mathrm{e}^{-\gamma r^2}\,r^{n-1-2\sigma-2s}$ as $r\geqslant \nu_{j_1}$.
\begin{itemize}
	\item When $n\in(2\sigma+2s,2\sigma+2s+1]$, it is easy to check $\theta_0=0$, and, in turn, $j_1=0$. Let us shrink the domain of integration from $[\nu_0,+\infty]$ to $[\gamma^{-{1/2}},2\gamma^{-{1/2}}]$ so that
	\begin{align*}
		I_n^{(5)}(t;\gamma)&=\frac{\omega_n}{2}\int_{\nu_0}^{+\infty}\mathrm{e}^{-\gamma r^2}\,r^{n-1-2\sigma-2s}\,\mathrm{d}r\geqslant\frac{\omega_n}{2}\int_{\gamma^{-\frac{1}{2}}}^{2\gamma^{-\frac{1}{2}}}\mathrm{e}^{-\gamma r^2}\,r^{n-1-2\sigma-2s}\,\mathrm{d}r\\
		&\geqslant\frac{\omega_n}{2\,\mathrm{e}^4}\int_{\gamma^{-\frac{1}{2}}}^{2\gamma^{-\frac{1}{2}}}r^{n-1-2\sigma-2s}\,\mathrm{d}r=\frac{\omega_n(2^{n-2\sigma-2s}-1)}{2(n-2\sigma-2s)\,\mathrm{e}^4}\,\gamma^{-\frac{n-2\sigma-2s}{2}}
	\end{align*}
for large time $t\gg1$ so that $\nu_0=(\frac{\pi}{4t})^{1/\sigma}\leqslant \gamma^{-1/2}$ since $\gamma$ is a fixed constant.
\item When $n\in(2\sigma+2s+1,+\infty)$, i.e. $\theta_0>0$, one notes that
\begin{align*}
	\nu_{j_1}^{\sigma}&\leqslant\frac{\pi}{4t}+\left(\theta_0^{\sigma}\frac{t}{\pi}+\frac{3}{4}\right)\frac{\pi}{t}= \frac{\pi}{t}+\theta_0^{\sigma}\leqslant 2^{\sigma}\theta_0^{\sigma}
\end{align*}
for large time $t\gg1$ such that $t\geqslant\frac{\pi}{(2^{\sigma}-1)\theta_0^{\sigma}}$. By shrinking the  domain of integration again from $[\nu_{j_1},+\infty]$ to $[2\theta_0,4\theta_0]$, one obtains immediately
\begin{align*}
	I_n^{(5)}(t;\gamma)&\geqslant\frac{\omega_n}{2}\int_{2\theta_0}^{4\theta_0}\mathrm{e}^{-\gamma r^2}\,r^{n-1-2\sigma-2s}\,\mathrm{d}r\\
	&\geqslant\frac{\omega_n(4^{n-2\sigma-2s}-2^{n-2\sigma-2s})}{2(n-2\sigma-2s)\,\mathrm{e}^{8(n-2\sigma-2s-1)}}\left(\frac{n-2\sigma-2s-1}{2}\right)^{\frac{n-2\sigma-2s}{2}}\gamma^{-\frac{n-2\sigma-2s}{2}}.
\end{align*}
\end{itemize} 
Thus, concerning $t\gg1$, for any $n\in(2\sigma+2s,+\infty)$ there exists a positive constant $C_1=C_1(n,\sigma,s)$ but independent of $\gamma$ and $t$ such that 
\begin{align*}
I_n^{(5)}(t;\gamma)&\geqslant C_1\,\omega_n\gamma^{-\frac{n-2\sigma-2s}{2}}.
\end{align*}
In conclusion, from \eqref{Est-01} we are able to claim
\begin{align*}
\ml{I}_{n,\sigma,s}(t)&\geqslant\frac{C_1\,\omega_n|P_{v_1}|^2}{8}\,\gamma^{-\frac{n-2\sigma-2s}{2}}-\frac{M^2\omega_n\|v_1\|_{L^{1,1}}^2}{2}\left(\int_0^{+\infty}\mathrm{e}^{-\eta^2}\,\eta^{n+1-2\sigma-2s}\,\mathrm{d}\eta\right)\gamma^{-\frac{n+2-2\sigma-2s}{2}}\\
&\quad\  -\left(C_0\,\omega_n\|v_0\|_{L^1}^2\int_0^{+\infty}\mathrm{e}^{-\eta^2}\,\eta^{n-1}\,\mathrm{d}\eta\right)\gamma^{-\frac{n}{2}}\\
&\geqslant\omega_n\gamma^{-\frac{n-2\sigma-2s}{2}}\left(\frac{C_1|P_{v_1}|^2}{8}-\frac{M^2\|v_1\|_{L^{1,1}}^2}{2\gamma}\int_0^{+\infty}\mathrm{e}^{-\eta^2}\,\eta^{n+1-2\sigma-2s}\,\mathrm{d}\eta\right.\\
&\left.\qquad\qquad\qquad\quad\ -\frac{C_0\|v_0\|_{L^1}^2}{\gamma^{\sigma+s}}\int_0^{+\infty}\mathrm{e}^{-\eta^2}\,\eta^{n-1}\,\mathrm{d}\eta\right)\\
&\geqslant \frac{C_1\,\omega_n}{16}\,\gamma^{-\frac{n-2\sigma-2s}{2}}|P_{v_1}|^2
\end{align*}
by taking suitably large (but fixed) constant $\gamma>0$ such that
\begin{align*}
\frac{C_1|P_{v_1}|^2}{16}-\frac{M^2\|v_1\|_{L^{1,1}}^2}{2\gamma}\int_0^{+\infty}\mathrm{e}^{-\eta^2}\,\eta^{n+1-2\sigma-2s}\,\mathrm{d}\eta-\frac{C_0\|v_0\|_{L^1}^2}{\gamma^{\sigma+s}}\int_0^{+\infty}\mathrm{e}^{-\eta^2}\,\eta^{n-1}\,\mathrm{d}\eta\geqslant0,
\end{align*}
due to $|P_{v_1}|\neq0$, $v_0\in L^1$, $v_1\in L^{1,1}$ and
\begin{align*}
\int_0^{+\infty}\mathrm{e}^{-\eta^2}\,\eta^{n+1-2\sigma-2s}\,\mathrm{d}\eta<+\infty,\ \ \int_0^{+\infty}\mathrm{e}^{-\eta^2}\,\eta^{n-1}\,\mathrm{d}\eta<+\infty.
\end{align*}
For example, we may choose $\gamma=\gamma(n,\sigma,s,\|v_0\|_{L^1},\|v_1\|_{L^{1,1}},|P_{v_1}|)$ by
\begin{align}\label{Est-03}
\gamma=\max\left\{\frac{16M^2\|v_1\|_{L^{1,1}}^2}{C_1|P_{v_1}|^2}\int_0^{+\infty}\mathrm{e}^{-\eta^2}\,\eta^{n+1-2\sigma-2s}\,\mathrm{d}\eta,\left[\frac{32C_0\|v_0\|_{L^1}^2}{C_1|P_{v_1}|^2}\int_0^{+\infty}\mathrm{e}^{-\eta^2}\,\eta^{n-1}\,\mathrm{d}\eta \right]^{\frac{1}{\sigma+s}} \right\}.
\end{align}
So, our proof of this proposition is complete.
\end{proof}

Let us turn to the remaining case $n\in[1,2s]$, which leads to the very strong singularity $|\xi|^{-(\sigma+s)}$ in lower dimensions even for fixed time. Therefore, even with the additional $L^{1,1}$ regularity on the initial data $v_1$ (but $|P_{v_1}|>0$), the quantity $\ml{I}_{n,\sigma,s}(t)$ cannot be locally (in time) defined with the stronger singularity $s\geqslant \frac{n}{2}$.
\begin{prop}\label{Prop-Blow-up}
Let $\sigma\geqslant 1$, $s\geqslant \frac{n}{2}$, and $|\widehat{\ml{K}}(t,\xi;\sigma)|\lesssim 1$ uniformly in $t,\xi$. Let us assume $v_0\in L^2$ and $v_1\in L^{1,1}$ with $|P_{v_1}|\neq0$. Then, the time-dependent function $\ml{I}_{n,\sigma,s}(t)$ blows up in finite time, precisely,
\begin{align*}
\ml{I}_{n,\sigma,s}(t)=+\infty\ \ \mbox{for}\ \ t\in(0,t_0]
\end{align*}
with any $t_0>0$.
\end{prop}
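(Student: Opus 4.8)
The plan is to isolate the singular low-frequency contribution coming from the factor $|\xi|^{-(\sigma+s)}$ and to show that it alone already forces a divergence, independently of the (finite) $v_0$-part. First I would split off the $v_0$-term exactly as in the proof of Proposition \ref{Prop-Lower}: applying the pointwise inequality $|f+g|^2\geqslant\frac12|f|^2-|g|^2$ with $f$ the $v_1$-term and $g$ the $v_0$-term and integrating (all integrands are nonnegative, so the bound remains meaningful in $[0,+\infty]$), one obtains
\[
\ml{I}_{n,\sigma,s}(t)\geqslant\frac12\int_{\mb{R}^n}\frac{|\sin(|\xi|^{\sigma}t)|^2}{|\xi|^{2\sigma+2s}}\,|\widehat{v}_1(\xi)|^2\,\mathrm{d}\xi-\int_{\mb{R}^n}|\widehat{\ml{K}}(t,\xi;\sigma)|^2\,|\widehat{v}_0(\xi)|^2\,\mathrm{d}\xi.
\]
Since $|\widehat{\ml{K}}(t,\xi;\sigma)|\lesssim1$ and $v_0\in L^2$, the subtracted integral is bounded by $C\|v_0\|_{L^2}^2<+\infty$. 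Hence it suffices to prove that the first integral equals $+\infty$ for each fixed $t>0$.

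Next I would localize near the origin. For a fixed $t>0$, choose an absolute constant $\delta_0>0$ small enough that $|\sin(|\xi|^{\sigma}t)/(|\xi|^{\sigma}t)|\geqslant1/\sqrt2$ whenever $|\xi|^{\sigma}t\leqslant\delta_0^{\sigma}$, exactly the elementary estimate used in the lower-dimensional part of Proposition \ref{Prop-Lower}. On the ball $\{|\xi|\leqslant\delta_0t^{-1/\sigma}\}$ this gives
\[
\frac{|\sin(|\xi|^{\sigma}t)|^2}{|\xi|^{2\sigma+2s}}\geqslant\frac{t^2}{2}\,|\xi|^{-2s}.
\]
Simultaneously, from the continuity estimate \eqref{Ineq-01}, i.e. $|\widehat{v}_1(\xi)|^2\gtrsim|P_{v_1}|^2-|\xi|^2\|v_1\|_{L^{1,1}}^2$, I would shrink the ball further to $\{|\xi|\leqslant\delta\}$ with $\delta:=\min\{\delta_0t^{-1/\sigma},\delta_1\}$, where $\delta_1>0$ is fixed so that $\delta_1^2\|v_1\|_{L^{1,1}}^2\leqslant\frac14|P_{v_1}|^2$; there one has $|\widehat{v}_1(\xi)|^2\gtrsim|P_{v_1}|^2$. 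Because $t$ is finite and $|P_{v_1}|\neq0$, the radius $\delta$ is strictly positive.

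Combining the two lower bounds and passing to polar coordinates yields
\[
\int_{\mb{R}^n}\frac{|\sin(|\xi|^{\sigma}t)|^2}{|\xi|^{2\sigma+2s}}\,|\widehat{v}_1(\xi)|^2\,\mathrm{d}\xi\gtrsim t^2|P_{v_1}|^2\,\omega_n\int_0^{\delta}r^{n-1-2s}\,\mathrm{d}r.
\]
The radial integral $\int_0^{\delta}r^{n-1-2s}\,\mathrm{d}r$ diverges precisely because $s\geqslant\frac{n}{2}$ makes $n-1-2s\leqslant-1$: a power divergence at the origin when $s>\frac{n}{2}$ and a logarithmic one when $s=\frac{n}{2}$. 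Thus the first integral is $+\infty$, and the opening display forces $\ml{I}_{n,\sigma,s}(t)=+\infty$. Since $t\in(0,t_0]$ was an arbitrary fixed positive time and $t_0>0$ is arbitrary, the conclusion follows.

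There is no deep obstacle in this argument; the care points are (i) keeping $\delta_0$ and $\delta_1$ independent of $t$, so that the nondegenerate ball $\{|\xi|\leqslant\delta\}$ genuinely survives for each fixed finite time, and (ii) legitimizing the triangle-inequality splitting when the $v_1$-integral is infinite. The latter is handled by reading the pointwise inequality in the form $|f+g|^2+|g|^2\geqslant\frac12|f|^2$ and integrating nonnegative quantities: the finiteness of the $v_0$-integral together with the divergence of the $v_1$-integral then forces $\ml{I}_{n,\sigma,s}(t)$ itself to diverge.
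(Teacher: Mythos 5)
Your proof is correct and follows essentially the same route as the paper: both isolate the $v_1$-term by the triangle inequality, invoke the moment estimate \eqref{Ineq-01} together with the lower bound for $|\sin(|\xi|^{\sigma}t)|/|\xi|^{\sigma}$ at small frequencies, and conclude from the non-integrability of $|\xi|^{-2s}$ at the origin when $s\geqslant\frac{n}{2}$. The only real difference is in the final step: the paper integrates over an annulus $[\varepsilon_0,\varepsilon_1]$ and lets $\varepsilon_0\downarrow0$, tracking the competing divergence rates through a case analysis on $n$ versus $2s$ and $2s-2$, whereas you fix the ball small enough that $|\widehat{v}_1(\xi)|^2\gtrsim|P_{v_1}|^2$ holds pointwise, which makes the divergence immediate, avoids the case distinctions, and (via the rearrangement $|f+g|^2+|g|^2\geqslant\frac{1}{2}|f|^2$) treats the $\infty-\infty$ issue in the splitting more carefully than the paper does.
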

\begin{proof}
Again from the triangle inequality as before, one notices
\begin{align*}
\ml{I}_{n,\sigma,s}(t)&\gtrsim|P_{v_1}|^2\int_{|\xi|\in[\varepsilon_0,\varepsilon_1]}\frac{|\sin(|\xi|^{\sigma}t)|^2}{|\xi|^{2\sigma+2s}}\,\mathrm{d}\xi-\|v_1\|_{L^{1,1}}^2\int_{|\xi|\in[\varepsilon_0,\varepsilon_1]}\frac{|\sin(|\xi|^{\sigma}t)|^2}{|\xi|^{2\sigma+2s-2}}\,\mathrm{d}\xi\\
&\quad\ -\int_{|\xi|\in[\varepsilon_0,\varepsilon_1]}|\widehat{\ml{K}}(t,\xi;\sigma)|^2\,|\widehat{v}_0(\xi)|^2\,\mathrm{d}\xi,
\end{align*}	
where $0<\varepsilon_0<\varepsilon_1$ such that $|\xi|^{\sigma}t\leqslant\varepsilon_1^{\sigma}t_0<1$ for $t=t_0$. For this reason, it holds that
\begin{align*}
\frac{|\sin(|\xi|^{\sigma}t_0)|^2}{|\xi|^{2\sigma}}\geqslant\frac{t_0^2}{4} \ \ \mbox{when}\ \ |\xi|\in[\varepsilon_0,\varepsilon_1],
\end{align*}
and
\begin{align*}
\ml{I}_{n,\sigma,s}(t_0)&\gtrsim t_0^2|P_{v_1}|^2\int_{|\xi|\in[\varepsilon_0,\varepsilon_1]}|\xi|^{-2s}\,\mathrm{d}\xi-t_0^2\|v_1\|_{L^{1,1}}^2\int_{|\xi|\in[\varepsilon_0,\varepsilon_1]}|\xi|^{2-2s}\,\mathrm{d}\xi-\|v_0\|_{L^2}^2\\
&\gtrsim t_0^2|P_{v_1}|^2\int_{\varepsilon_0}^{\varepsilon_1}r^{n-1-2s}\,\mathrm{d}r-t_0^2\|v_1\|_{L^{1,1}}^2\int_{\varepsilon_0}^{\varepsilon_1}r^{n+1-2s}\,\mathrm{d}r-\|v_0\|_{L^2}^2,
\end{align*}
where the unexpressed multiplicative constants are independent of $t_0,\varepsilon_0,\varepsilon_1$. Some direct computations imply that
\begin{align*}
\ml{I}_{n,\sigma,s}(t_0)\gtrsim\begin{cases}
\displaystyle{t_0^2|P_{v_1}|^2\left(\ln\varepsilon_0^{-1}+\ln\varepsilon_1\right)-\frac{t_0^2\|v_1\|_{L^{1,1}}^2}{2}\left(\varepsilon_1^2-\varepsilon_0^2\right)-\|v_0\|_{L^2}^2}\ \ \mbox{if}\ \ n=2s,\\[0.7em]
\displaystyle{t_0^2|P_{v_1}|^2\,\frac{(\varepsilon_0^{-1})^{2s-n}-\varepsilon_1^{n-2s}}{2s-n}-\|v_0\|_{L^2}^2}&\\[0.7em]
\qquad\qquad\qquad\ -t_0^2\|v_1\|_{L^{1,1}}^2\times\begin{cases}
\displaystyle{\frac{\varepsilon_1^{n-2s+2}-\varepsilon_0^{n-(2s-2)}}{n-(2s-2)}}&\mbox{if}\ \ n\in(2s-2,2s),\\[0.7em]
\displaystyle{\ln\varepsilon_0^{-1}+\ln\varepsilon_1}&\mbox{if}\ \ n=2s-2,\\[0.7em]
\displaystyle{\frac{(\varepsilon_0^{-1})^{(2s-2)-n}-\varepsilon_1^{n-2s+2}}{(2s-2)-n}}&\mbox{if}\ \ n\in[1,2s-2).
\end{cases}
\end{cases}
\end{align*}
Letting $\varepsilon_0\downarrow 0$, i.e. $\varepsilon_0^{-1}\to+\infty$, but $\varepsilon_1$ is fixed, it immediately shows that $\ml{I}_{n,\sigma,s}(t_0)=+\infty$ at any finite time $t=t_0>0$, because $|P_{v_1}|>0$ and $(\varepsilon_0^{-1})^{2s-n}\gg \ln\varepsilon_0^{-1},(\varepsilon_0^{-1})^{(2s-2)-n}$ when $2s>n$. The proof is complete.
\end{proof}

Finally, we take $v_1\in L^2$ only, i.e. without any additional $L^1$ integrability, to investigate its large time behavior. Considering a special class of initial data, we just can expect polynomially growth estimates for $\ml{I}_{n,\sigma,0}(t)$. In other words, in comparison with Propositions \ref{Prop-Upper} and \ref{Prop-Lower}, the additional $L^1$ (or, further $L^{1,1}$) integrability for $v_1$ seems essential to stabilize the quantity $\ml{I}_{n,\sigma,0}(t)$ for higher dimensions, which does not contain any stronger singularity, as large time $t\gg1$.
\begin{prop}\label{Prop-L2-data}
Let $\sigma\geqslant 1$, and $|\widehat{\ml{K}}(t,\xi;\sigma)|\lesssim 1$ uniformly in $t,\xi$. Let us assume $v_0\in L^2$ and $v_1\in L^2$ such that $|\widehat{v}_1(\xi)|\gtrsim|\xi|^{-\frac{n}{2}+\frac{\sigma\epsilon}{2}}$ for $|\xi|\in(0,1]$ with a sufficiently small constant $\epsilon>0$. Then, the time-dependent function $\ml{I}_{n,\sigma,0}(t)$ satisfies the following almost optimal estimates:
\begin{align*}
t^{2-\epsilon}\lesssim\ml{I}_{n,\sigma,0}(t)\lesssim t^2\|(v_0,v_1)\|_{L^2\times L^2}^2
\end{align*}
for large time $t\gg1$.
\end{prop}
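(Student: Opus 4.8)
The plan is to treat the upper and lower bounds separately, the lower bound being where the hypothesis $|\widehat{v}_1(\xi)|\gtrsim|\xi|^{-\frac{n}{2}+\frac{\sigma\epsilon}{2}}$ on $(0,1]$ plays its decisive role. For the upper bound I would simply apply the triangle inequality in $L^2$ together with $|\widehat{\ml{K}}(t,\xi;\sigma)|\lesssim 1$ and the elementary estimate $\big|\frac{\sin(|\xi|^{\sigma}t)}{|\xi|^{\sigma}}\big|\leqslant t$ (coming from $|\sin(|\xi|^{\sigma}t)|\leqslant|\xi|^{\sigma}t$), which gives $\ml{I}_{n,\sigma,0}(t)^{1/2}\lesssim\|v_0\|_{L^2}+t\|v_1\|_{L^2}\lesssim t\,\|(v_0,v_1)\|_{L^2\times L^2}$ for $t\gg1$; squaring yields the claimed $t^2$ upper bound. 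This step is routine and is essentially the $s=0$, large-$t$ specialization of Proposition \ref{Prop-Upper}.

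The heart of the matter is the lower bound. Following the philosophy of Proposition \ref{Prop-Lower}, I would discard the $v_0$-contribution via the inequality $|f+g|^2\geqslant\frac{1}{2}|g|^2-|f|^2$, obtaining
\[
\ml{I}_{n,\sigma,0}(t)\geqslant\frac{1}{2}\int_{\mb{R}^n}\frac{|\sin(|\xi|^{\sigma}t)|^2}{|\xi|^{2\sigma}}\,|\widehat{v}_1(\xi)|^2\,\mathrm{d}\xi-\int_{\mb{R}^n}|\widehat{\ml{K}}(t,\xi;\sigma)|^2\,|\widehat{v}_0(\xi)|^2\,\mathrm{d}\xi,
\]
where the last term is bounded by $C\|v_0\|_{L^2}^2$, a constant independent of $t$. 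For the main term I would shrink the domain of integration to the zone-I region $\{|\xi|\leqslant\delta_0 t^{-1/\sigma}\}$ (see Figure \ref{imggg}) with $\delta_0>0$ small, on which $\big|\frac{\sin(|\xi|^{\sigma}t)}{|\xi|^{\sigma}t}\big|\geqslant\frac{1}{\sqrt{2}}$, and then insert the lower bound $|\widehat{v}_1(\xi)|^2\gtrsim|\xi|^{-n+\sigma\epsilon}$ (legitimate since $\delta_0 t^{-1/\sigma}\leqslant1$ for large $t$). This reduces the estimate to
\[
t^2\int_{|\xi|\leqslant\delta_0 t^{-1/\sigma}}|\xi|^{-n+\sigma\epsilon}\,\mathrm{d}\xi\approx t^2\int_0^{\delta_0 t^{-1/\sigma}}r^{\sigma\epsilon-1}\,\mathrm{d}r\approx t^2\,t^{-\epsilon}=t^{2-\epsilon},
\]
where convergence of the radial integral at $r=0$ is guaranteed precisely by $\sigma\epsilon>0$.

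Combining these, $\ml{I}_{n,\sigma,0}(t)\gtrsim t^{2-\epsilon}-C\|v_0\|_{L^2}^2\gtrsim t^{2-\epsilon}$ for $t\gg1$, since $2-\epsilon>0$ makes the growing term dominate the constant. I expect the only delicate point to be the bookkeeping: one must verify that after the polar coordinates the radial integrand is genuinely $r^{\sigma\epsilon-1}$ (so the singularity at the origin is integrable exactly because $\epsilon>0$), and that after the factor $t^2$ and the cut-off scale $t^{-1/\sigma}$ are accounted for, the surviving power is exactly $t^{2-\epsilon}$. It is also worth remarking---as the paper does---that the same computation with the full integral $\int_{|\xi|\leqslant1}|\xi|^{-n+\sigma\epsilon}\,\mathrm{d}\xi<+\infty$ shows the hypothesis is consistent with $v_1\in L^2$, so no contradiction arises; this small-frequency singularity is precisely the source of the $t^{-\epsilon}$ loss, explaining why the estimate is only \emph{almost} optimal and why removing the singularity would be required to reach the conjectured sharp rate $t^2$.
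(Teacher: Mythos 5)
Your proof is correct, and your upper bound is exactly the paper's. For the lower bound, however, you take a genuinely different (and more elementary) route. The paper keeps the domain $[\nu_0,1]\cap\bigl(\cup_{j=0}^{+\infty}[\nu_j,\mu_j]\bigr)$, i.e.\ the union of annuli on which $|\sin(|\xi|^{\sigma}t)|\geqslant\tfrac{1}{\sqrt{2}}$ via \eqref{Treatment-sine}, and then uses the monotonicity of $r^{-2\sigma-1+\sigma\epsilon}$ together with the length property \eqref{Seq-02} to bound the sum below by $\tfrac{1}{2}\int_{(\pi/4)^{1/\sigma}t^{-1/\sigma}}^{1}r^{-2\sigma-1+\sigma\epsilon}\,\mathrm{d}r\gtrsim t^{2-\epsilon}$; you instead restrict to the single ball $\{|\xi|\leqslant\delta_0t^{-1/\sigma}\}$ (zone I of Figure \ref{imggg}), where $\sin(|\xi|^{\sigma}t)$ is comparable to its argument, and compute $t^{2}\int_{0}^{\delta_0t^{-1/\sigma}}r^{\sigma\epsilon-1}\,\mathrm{d}r\approx t^{2}\cdot t^{-\epsilon}$. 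Both computations extract the dominant contribution from frequencies $|\xi|\approx t^{-1/\sigma}$ and give the same rate, so either is acceptable. Your version is shorter and avoids the $\nu_j,\mu_j$ machinery entirely, at the price of needing the explicit hypothesis $|\widehat{v}_1(\xi)|\gtrsim|\xi|^{-\frac{n}{2}+\frac{\sigma\epsilon}{2}}$ down to $|\xi|=0$ (which is available here but was not in Proposition \ref{Prop-Lower}, where only the zeroth moment controls $\widehat{v}_1$ near the origin); the paper's version recycles the annuli construction uniformly across Propositions \ref{Prop-Lower} and \ref{Prop-L2-data} and makes visible that the whole intermediate range $t^{-1/\sigma}\lesssim|\xi|\leqslant1$ participates. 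Your closing observation on consistency with $v_1\in L^2$ matches \eqref{L2data}. One tiny bookkeeping point: you should state explicitly (as you do implicitly) that the discarded $v_0$-term is $O(1)$ and hence absorbed since $t^{2-\epsilon}\to+\infty$; with that, the argument is complete.
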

\begin{remark}
Under the condition $v_1\in L^2$, the function $\widehat{v}_1$ is well-defined in $L^2$ also. Our assumption for $\widehat{v}_1$ is localized for small frequencies only, which guarantees the almost sharpness of lower bound. It is worth noting that $v_1\not\in L^2$ if $\epsilon=0$ according to \eqref{L2data}.
\end{remark}
\begin{proof}
The upper bound estimate is trivial from
\begin{align*}
\ml{I}_{n,\sigma,0}(t)&\lesssim\int_{\mb{R}^n}|\widehat{\ml{K}}(t,\xi;\sigma)|^2\,|\widehat{v}_0(\xi)|^2\,\mathrm{d}\xi+t^2\int_{\mb{R}^n}\left|\frac{\sin(|\xi|^{\sigma}t)}{|\xi|^{\sigma}t}\right|^2|\widehat{v}_1(\xi)|^2\,\mathrm{d}\xi\\
&\lesssim \|v_0\|_{L^2}^2+t^2\|v_1\|_{L^2}^2.
\end{align*}
Concerning its lower bound we know that for large time such that $\nu_0=(\frac{\pi}{4})^{\frac{1}{\sigma}}t^{-\frac{1}{\sigma}}< 1$ the domain of integration can be shrunk by $[\nu_0,1]\cap(\cup_{j=0}^{+\infty}[\nu_j,\mu_j])$, to be specific,
\begin{align*}
\int_{\mb{R}^n}\frac{|\sin(|\xi|^{\sigma}t)|^2}{|\xi|^{2\sigma}}\,|\widehat{v}_1(\xi)|^2\,\mathrm{d}\xi&\geqslant\int_{|\xi|\in[\nu_0,1]}\frac{|\sin(|\xi|^{\sigma}t)|^2}{|\xi|^{2\sigma}}\,|\widehat{v}_1(\xi)|^2\,\mathrm{d}\xi\\
&\geqslant\sum\limits_{j=0}^{+\infty}\int_{|\xi|\in[\nu_0,1]\cap[\nu_j,\mu_j]}\left(\frac{1}{\sqrt{2}}\right)^2|\xi|^{-2\sigma}\,|\widehat{v}_1(\xi)|^2\,\mathrm{d}\xi,
\end{align*}
where we used \eqref{Treatment-sine}.
From our assumption $|\widehat{v}_1(\xi)|^2\gtrsim |\xi|^{-n+\sigma\epsilon}$ for $|\xi|\in(0,1]$ as well as $\nu_0>0$, we may further obtain
\begin{align*}
\int_{\mb{R}^n}\frac{|\sin(|\xi|^{\sigma}t)|^2}{|\xi|^{2\sigma}}\,|\widehat{v}_1(\xi)|^2\,\mathrm{d}\xi&\gtrsim \sum\limits_{j=0}^{+\infty}\int_{|\xi|\in[\nu_0,1]\cap[\nu_j,\mu_j]}|\xi|^{-2\sigma-n+\sigma\epsilon}\,\mathrm{d}\xi\\
&\gtrsim \sum\limits_{j=0}^{+\infty}\int_{r\in[\nu_0,1]\cap[\nu_j,\mu_j]}r^{-2\sigma-1+\sigma\epsilon}\,\mathrm{d}r\\
&\gtrsim\frac{1}{2}\int_{(\frac{\pi}{4})^{\frac{1}{\sigma}}t^{-\frac{1}{\sigma}}}^1r^{-2\sigma-1+\sigma\epsilon}\,\mathrm{d}r\gtrsim t^{2-\epsilon}
\end{align*}
for large time $t\gg1$, where we employed $-2\sigma-1+\sigma\epsilon<0$ with $0<\epsilon\ll\sigma$, i.e. $r^{-2\sigma-1+\sigma\epsilon}$ is a monotone decreasing function, and the length property in \eqref{Seq-02}.
Particularly, this initial data belongs to $L^2$ only since
\begin{align}\label{L2data}
\int_{|\xi|\leqslant 1}|\widehat{v}_1(\xi)|^2\,\mathrm{d}\xi\gtrsim\int_0^1r^{-1+\sigma\epsilon}\,\mathrm{d}r=\frac{1}{\sigma\epsilon} \ \ \mbox{and}\ \ v_1\in L^2.
\end{align}
The proof is finished.
\end{proof}

\subsection{Proofs of Theorems \ref{Thm-L1-data} and \ref{Thm-L2-data}}\label{Sub-Sec-Proof}
\hspace{5mm}According to the standard Fourier transform with respect to the spatial variable to the free wave equation \eqref{Eq-Waves}, the unknown $\widehat{u}=\widehat{u}(t,\xi)$ satisfies the following Cauchy problem (see some PDEs textbooks, e.g. \cite[Chapter 14]{Ebert-Reissig=2018}):
\begin{align*}
\begin{cases}
\widehat{u}_{tt}+|\xi|^2\,\widehat{u}=0,&\xi\in\mb{R}^n,\ t>0,\\
(\widehat{u},\widehat{u}_t)(0,\xi)=(\widehat{u}_0,\widehat{u}_1)(\xi),&\xi\in\mb{R}^n,
\end{cases}
\end{align*}
whose solution is uniquely addressed by
\begin{align*}
\widehat{u}(t,\xi)=\cos(|\xi|t)\,\widehat{u}_0(\xi)+\frac{\sin(|\xi|t)}{|\xi|}\,\widehat{u}_1(\xi).
\end{align*}
That is to say, we can apply Propositions \ref{Prop-Upper}-\ref{Prop-L2-data} with $\widehat{\ml{K}}(t,\xi;1)=\cos(|\xi|t)$, $\widehat{v}_0(\xi)=\widehat{u}_0(\xi)$, $\widehat{v}_1(\xi)=|\xi|^s\widehat{u}_1(\xi)$ and $\sigma=1$ to complete our proofs of Theorems \ref{Thm-L1-data}-\ref{Thm-L2-data}.
 
\subsection{Proof of Theorem \ref{Thm-stable}}\label{Sub-Sec-Proof-2}
\hspace{5mm}Let us turn back to the general kernel $\ml{I}_{n,\sigma,s}(t)$ firstly with a stronger assumption on $v_1$ to stabilize it globally in time, where $|\widehat{\ml{K}}(t,\xi;\sigma)|\lesssim 1$ still holds uniformly in $t,\xi$. In Propositions \ref{Prop-Upper}-\ref{Prop-Blow-up}, we have demonstrated
\begin{align*}
\ml{I}_{n,\sigma,s}(t)\approx\begin{cases}
+\infty&\mbox{if}\ \ n\in[1,2s],\\
t^{2-\frac{n-2s}{\sigma}}&\mbox{if}\ \ n\in(2s,2\sigma+2s),\\
\ln t&\mbox{if}\ \ n=2\sigma+2s,
\end{cases}
\end{align*}
which is not stable even for local in time (when $n\leqslant 2s$), provided that $v_0\in L^2$ and $v_1\in L^2\cap L^{1,1}$ with $|P_{v_1}|\neq0$. To improve the last optimal estimates, one may try to use $|P_{v_1}|=0$. For this reason, we in the following discussion only focus on the case $n\in[1, 2\sigma+2s]$, where in the opposite case $n\in(2\sigma+2s,+\infty)$ the quantity $\ml{I}_{n,\sigma,s}(t)$ is stable for large time with the initial data $v_1\in L^2\cap L^{1,1}$ and $|P_{v_1}|\neq0$.

\begin{prop}\label{Prop-Stable}
	Let $\sigma\geqslant 1$, and $|\widehat{\ml{K}}(t,\xi;\sigma)|\lesssim 1$ uniformly in $t,\xi$. Let us assume $v_0\in L^2$ and $v_1\in L^2\cap L^{1,\kappa}$ with $\lfloor\kappa\rfloor>\sigma+s-\frac{n}{2}+1$ in the lower dimensional case $n\in[1, 2\sigma+2s]$ such that
	\begin{align}\label{assumption-v1}
		\int_{\mb{R}^n}x^{\alpha}\,v_1(x)\,\mathrm{d}x\begin{cases}
			=0&\mbox{if}\ \ |\alpha|<\lfloor\kappa\rfloor-1,\\
			\neq0&\mbox{if}\ \ |\alpha|=\lfloor\kappa\rfloor-1.
		\end{cases}
	\end{align} Then, the time-dependent function $\ml{I}_{n,\sigma,s}(t)$ satisfies the following optimal estimate:
 \begin{align*}
	|P_{x^{\lfloor\kappa\rfloor-1}v_1}|^2\lesssim \ml{I}_{n,\sigma,s}(t)\lesssim \|v_0\|_{L^2}^2+\|v_1\|_{L^2\cap L^{1,\kappa}}^2
\end{align*}
	for large time $t\gg1$.
\end{prop}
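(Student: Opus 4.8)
The plan is to run the two halves of Proposition~\ref{Prop-Upper} and the higher-dimensional part of Proposition~\ref{Prop-Lower} in parallel, the guiding principle being that the vanishing moment hypothesis \eqref{assumption-v1} lifts $\widehat v_1$ to order $|\xi|^{\lfloor\kappa\rfloor-1}$ near the origin and thereby pushes $\ml I_{n,\sigma,s}$ into the ``higher-dimensional'' regime of Proposition~\ref{Prop-Lower}, now for an \emph{effective} radial exponent. Writing $m:=\lfloor\kappa\rfloor$, I would first record, from a Taylor expansion of $\mathrm e^{-\mathrm i x\cdot\xi}$ and the remainder estimate behind \eqref{Ineq-01} (cf. \cite{Ikehata=2004}), the decomposition
\begin{align*}
\widehat v_1(\xi)=Q(\xi)+R(\xi),\qquad Q(\xi):=\sum_{|\alpha|=m-1}\frac{(-\mathrm i\xi)^\alpha}{\alpha!}\int_{\mb R^n}x^\alpha v_1(x)\,\mathrm dx,
\end{align*}
where $|R(\xi)|\lesssim|\xi|^{\kappa}\|v_1\|_{L^{1,\kappa}}$ and, by \eqref{assumption-v1}, the homogeneous polynomial $Q$ of degree $m-1$ does not vanish identically. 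Since the monomials $\{\xi^\alpha:|\alpha|=m-1\}$ are linearly independent on the unit sphere, $\int_{|\omega|=1}|Q(\omega)|^2\,\mathrm d\sigma_\omega\approx|P_{x^{m-1}v_1}|^2>0$. Setting $\beta:=2(m-1)-2(\sigma+s)$, the hypothesis $m>\sigma+s-\frac n2+1$ is precisely $n+\beta>0$, i.e. $r^{n-1+\beta}$ is integrable at the origin.

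For the upper bound I would split $\{|\xi|\le1\}$ and $\{|\xi|\ge1\}$ exactly as in Proposition~\ref{Prop-Upper}. On $\{|\xi|\ge1\}$ the factor $|\xi|^{-2(\sigma+s)}\le1$ gives the $v_1$-contribution $\lesssim\|v_1\|_{L^2}^2$, while the $v_0$-term is $\lesssim\|v_0\|_{L^2}^2$ by $|\widehat{\ml K}|\lesssim1$. On $\{|\xi|\le1\}$ I would discard the oscillation via $|\sin(|\xi|^\sigma t)|\le1$ and insert the Taylor bound $|\widehat v_1(\xi)|\lesssim|\xi|^{m-1}\|v_1\|_{L^{1,\kappa}}$ (valid because $m-1\le\kappa$), so that the integrand is $\lesssim|\xi|^{\beta}\|v_1\|_{L^{1,\kappa}}^2$ and $\int_0^1 r^{n-1+\beta}\,\mathrm dr<+\infty$ \emph{uniformly in} $t$. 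This is exactly where the stabilizing condition $m>\sigma+s-\frac n2+1$ is used, and it yields the time-independent right-hand side of the asserted estimate.

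For the lower bound I would reproduce the higher-dimensional computation of Proposition~\ref{Prop-Lower} with $\widehat v_1$ replaced by its leading part $Q$. Set $\ml J(t):=\|\tfrac{\sin(|\xi|^\sigma t)}{|\xi|^{\sigma+s}}\widehat v_1\|_{L^2}^2$. On a fixed ball $\{|\xi|\le\rho\}$ I would use $|\widehat v_1|^2\ge\tfrac12|Q|^2-|R|^2$, restrict to the intervals $[\nu_j,\mu_j]$ where $|\sin(|\xi|^\sigma t)|\ge\tfrac1{\sqrt2}$ as in \eqref{Treatment-sine}, and invoke the length bound \eqref{Seq-02}; in polar coordinates the $Q$-part produces
\begin{align*}
\ml J(t)\gtrsim\Big(\int_{|\omega|=1}|Q(\omega)|^2\,\mathrm d\sigma_\omega\Big)\int_{\nu_0}^{\rho}|\sin(r^\sigma t)|^2\,r^{n-1+\beta}\,\mathrm dr,
\end{align*}
which converges to a strictly positive multiple of $|P_{x^{m-1}v_1}|^2$ as $t\to+\infty$ because $n-1+\beta>-1$ and $\nu_0\to0$ — exactly the radial integral of \eqref{Est-02} with $n-1-2\sigma-2s$ replaced by $n-1+\beta$. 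The remainder carries the higher power $|R|^2\lesssim|\xi|^{2\kappa}$, hence raises the radial exponent by $2(\kappa-m+1)>0$; shrinking $\rho$ renders its contribution a fraction of the $Q$-part, so that $\ml J(t)\gtrsim|P_{x^{m-1}v_1}|^2$ for $t\gg1$.

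The step I expect to be the main obstacle is passing from $\ml J(t)$ to $\ml I_{n,\sigma,s}(t)$, i.e. disentangling the $v_0$-term. Unlike the lower-dimensional estimates of Proposition~\ref{Prop-Lower}, whose main term \emph{grows} in $t$ and therefore swallows the fixed constant $\|v_0\|_{L^2}^2$, here $\ml J(t)$ is only of constant order, so the crude bound $\ml I_{n,\sigma,s}\ge\tfrac12\ml J-\|v_0\|_{L^2}^2$ is inconclusive for large data. The remedy I would use is the oscillation of the kernel present in all the intended applications, $\widehat{\ml K}(t,\xi;\sigma)=\cos(|\xi|^\sigma t)$: expanding the square,
\begin{align*}
\ml I_{n,\sigma,s}(t)=\big\|\widehat{\ml K}\widehat v_0\big\|_{L^2}^2+\ml J(t)+\Re\int_{\mb R^n}\sin(2|\xi|^\sigma t)\,\frac{\widehat v_0(\xi)\,\overline{\widehat v_1(\xi)}}{|\xi|^{\sigma+s}}\,\mathrm d\xi,
\end{align*}
and the density $|\xi|^{-(\sigma+s)}\widehat v_0\overline{\widehat v_1}$ lies in $L^1(\mb R^n)$ by Cauchy--Schwarz, since $\int_{\mb R^n}|\xi|^{-2(\sigma+s)}|\widehat v_1|^2\,\mathrm d\xi<+\infty$ (again by $n+\beta>0$ near the origin) and $v_0\in L^2$. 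Hence the cross term tends to $0$ as $t\to+\infty$ by the Riemann--Lebesgue lemma, the nonnegative $v_0$-term only helps, and $\ml I_{n,\sigma,s}(t)\ge\ml J(t)+o(1)\gtrsim|P_{x^{m-1}v_1}|^2$. For a general kernel one instead keeps the exponential factor $\mathrm e^{-\gamma|\xi|^2}$ and fixes $\gamma$ large as in \eqref{Est-03}; the oscillatory viewpoint above is precisely what makes $v_0\in L^2$ (rather than $v_0\in L^1$) sufficient in this lower-dimensional setting. Combining this lower bound with the upper bound completes the two-sided optimal estimate.
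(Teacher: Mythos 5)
Your proposal is correct in substance and follows the same architecture as the paper's proof (moment expansion of $\widehat v_1$ to gain $|\xi|^{\lfloor\kappa\rfloor-1}$ near the origin, Fourier splitting at $|\xi|=1$ for the upper bound, restriction to the intervals $[\nu_j,\mu_j]$ of \eqref{Seq-02}--\eqref{Treatment-sine} for the lower bound, viewed as the higher-dimensional case of Proposition \ref{Prop-Lower} with $s$ shifted to $s-\lfloor\kappa\rfloor+1$), but it deviates from the paper in two places, and in both you are the more careful one. First, the paper's lower bound \eqref{Est-06} is stated as a \emph{pointwise} inequality $|\widehat v_1(\xi)|^2\gtrsim|\xi|^{2(\lfloor\kappa\rfloor-1)}|P_{x^{\lfloor\kappa\rfloor-1}v_1}|^2-\dots$, which cannot hold pointwise once $\lfloor\kappa\rfloor\geqslant 2$ (the leading homogeneous polynomial $Q$ has zeros on the sphere); your reformulation via the angular average $\int_{|\omega|=1}|Q(\omega)|^2\,\mathrm d\sigma_\omega\approx|P_{x^{\lfloor\kappa\rfloor-1}v_1}|^2$, justified by linear independence of the degree-$(\lfloor\kappa\rfloor-1)$ monomials on the sphere, is what actually makes the radial computation go through. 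Second, you correctly isolate the real difficulty the paper glosses over: since the main term is only $O(1)$ in $t$, the crude bound $\ml I\geqslant\tfrac12\ml J-\|v_0\|_{L^2}^2$ does not close, and the paper's appeal to ``a suitable large $\gamma$ as in \eqref{Est-03}'' quietly uses $\|v_0\|_{L^1}$, which is not among the hypotheses of Proposition \ref{Prop-Stable}. Your Riemann--Lebesgue argument on the cross term (legitimate since $|\xi|^{-(\sigma+s)}\widehat v_0\overline{\widehat v_1}\in L^1$ under the stated moment and integrability conditions) genuinely repairs this for the oscillatory kernel $\widehat{\ml K}=\cos(|\xi|^\sigma t)$, which is the case needed for Theorem \ref{Thm-stable} and all the applications; note, however, that it does not prove the proposition for an arbitrary bounded $\widehat{\ml K}$ with $v_0\in L^2$ only (your own fallback, the Gaussian factor with large $\gamma$, again needs $v_0\in L^1$) --- but that limitation is inherited from the paper, not introduced by you. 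Two minor points: the remainder in your decomposition satisfies $|R(\xi)|\lesssim(|\xi|^{\lfloor\kappa\rfloor}+|\xi|^{\kappa})\|v_1\|_{L^{1,\kappa}}$ rather than $|\xi|^{\kappa}\|v_1\|_{L^{1,\kappa}}$, since the order-$\lfloor\kappa\rfloor$ moments need not vanish; this still gains a factor $|\xi|^2$ over $|Q|^2$ near the origin, so shrinking $\rho$ absorbs it exactly as you describe. Also, in the expanded square the cross term carries a factor $2$ and, for complex-valued transforms, should be written as $2\Re$ of the integral; this does not affect the conclusion.
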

\begin{proof}
Recalling the notation
\begin{align*}
	\ml{M}_{\alpha}(f):=\frac{(-1)^{|\alpha|}}{\alpha!}\int_{\mb{R}^n}x^{\alpha}f(x)\,\mathrm{d}x\ \ \mbox{with}\ \ |\alpha|\leqslant\lfloor\kappa\rfloor 
\end{align*}
for $f\in L^{1,\kappa}$, the recent work \cite[Lemma 5.1]{Ikehata-Michihisa=2019} has proved that
\begin{align*}
	\left|\widehat{f}(\xi)-\sum\limits_{|\alpha|\leqslant \lfloor \kappa \rfloor}\ml{M}_{\alpha}(f)(i\xi)^{\alpha}\right|\leqslant C_{\kappa}\,|\xi|^{\kappa}\int_{\mb{R}^n}|x|^{\kappa}|f(x)|\,\mathrm{d}x 
\end{align*}
with $n\geqslant 1$ and $\kappa\geqslant0$, where $C_{\kappa}>0$ is a constant independent of $\xi$ and $f$. 

According to our assumption \eqref{assumption-v1} on the initial data $v_1$ and the last inequality, we may conclude $\ml{M}_{\alpha}(v_1)=0$ for any $|\alpha|<\lfloor\kappa\rfloor-1$, furthermore,
\begin{align*}
	\left|\widehat{v}_1(\xi)-[\ml{M}_{\alpha}(v_1)(i\xi)^{\alpha}]\big|_{|\alpha|=\lfloor\kappa\rfloor-1}\right|&\leqslant[\ml{M}_{\alpha}(v_1)(i\xi)^{\alpha}]\big|_{|\alpha|=\lfloor\kappa\rfloor}+ C_{\kappa}\,|\xi|^{\kappa}\int_{\mb{R}^n}|x|^{\kappa}|v_1(x)|\,\mathrm{d}x\\
	&\lesssim |\xi|^{\lfloor\kappa\rfloor}\|v_1\|_{L^{1,\lfloor\kappa\rfloor}}+|\xi|^{\kappa}\|v_1\|_{L^{1,\kappa}}\\
	&\lesssim (|\xi|^{\lfloor\kappa\rfloor}+|\xi|^{\kappa})\|v_1\|_{L^{1,\kappa}}
\end{align*} 
and, by the triangle inequality,
\begin{align}\label{Est-10}
	|\widehat{v}_1(\xi)|&\lesssim|\xi|^{\lfloor\kappa\rfloor-1}\|v_1\|_{L^{1,\lfloor\kappa\rfloor-1}}+(|\xi|^{\lfloor\kappa\rfloor}+|\xi|^{\kappa})\|v_1\|_{L^{1,\kappa}}\notag\\
	&\lesssim (|\xi|^{\lfloor\kappa\rfloor-1}+|\xi|^{\lfloor\kappa\rfloor}+|\xi|^{\kappa})\|v_1\|_{L^{1,\kappa}},
\end{align}
where we used $L^{1,\lfloor\kappa\rfloor-1}\supset L^{1,\lfloor\kappa\rfloor}\supseteq L^{1,\kappa}$. Note that $\kappa\geqslant 2$ due to $\lfloor \kappa\rfloor>\sigma+s-\frac{n}{2}+1$ and $\sigma+s-\frac{n}{2}\geqslant0$.
As a consequence, the large time upper bound can be derived by
\begin{align}\label{Bdd-est}
	\ml{I}_{n,\sigma,s}(t)&\lesssim \|v_0\|_{L^2}^2+\left\|\sin(|\xi|^{\sigma}t)(|\xi|^{\lfloor\kappa\rfloor-1}+|\xi|^{\lfloor\kappa\rfloor}+|\xi|^{\kappa})|\xi|^{-\sigma-s}\|v_1\|_{L^{1,\kappa}}\right\|_{L^2(|\xi|\leqslant 1)}^2+\|\widehat{v}_1\|_{L^2(|\xi|\geqslant 1)}^2\notag\\
	&\lesssim \|v_0\|_{L^2}^2+\|v_1\|_{L^2}^2+\int_0^1r^{n-1+2\lfloor\kappa\rfloor-2-2\sigma-2s}\,\mathrm{d}r\,\|v_1\|_{L^{1,\kappa}}^2\notag\\
	&\lesssim \|v_0\|_{L^2}^2+\|v_1\|_{L^2\cap L^{1,\kappa}}^2,
\end{align}
in which we used $n-1+2\lfloor\kappa\rfloor-2-2\sigma-2s>-1$, i.e. $\lfloor\kappa\rfloor>\sigma+s-\frac{n}{2}+1$. This estimate already showed the desired stability.
\begin{remark}\
	By assuming $\int_{\mb{R}^n}x^{\alpha}\,v_1(x)\,\mathrm{d}x=0$ if $|\alpha|\leqslant\lfloor\kappa\rfloor$ strongly in \eqref{assumption-v1}, then the right-hand side of estimate \eqref{Est-10} can be improved by $|\xi|^{\kappa}\|v_1\|_{L^{1,\kappa}}$. It justifies the boundedness in \eqref{Bdd-est} with the aid of
	\begin{align*}
		\int_0^1 r^{n-1+2\kappa-2\sigma-2s}\,\mathrm{d}r<+\infty
	\end{align*}
	when $\kappa>\sigma+s-\frac{n}{2}$ for $v_1\in L^2\cap L^{1,\kappa}$. The special cases with $(\sigma,s)=(2,0)$ when $n=1$ with $\kappa\in(\frac{3}{2},2]$, and $n=2$ with $\kappa\in(1,2]$, exactly coincide with those of \cite[Theorem 1.4]{Ikehata=2024}; when $n=3$ with $\kappa\in(\frac{1}{2},1]$, and $n=4$ with $\kappa\in(0,1]$, exactly coincide with those of \cite[Theorem 1.3]{Ikehata=2024}.
\end{remark}

Concerning its optimal lower bound for large time, thanks to the additional assumption \eqref{assumption-v1}, i.e. $[\ml{M}_{\alpha}(v_1)]|_{|\alpha|=\lfloor\kappa\rfloor-1}\neq0$, we apply the triangle inequality to give
\begin{align}
	|\widehat{v}_1(\xi)|^2\geqslant\frac{1}{2[(\lfloor\kappa\rfloor-1)!]^2}\,|\xi|^{2(\lfloor\kappa\rfloor-1)}|P_{x^{\lfloor\kappa\rfloor-1}v_1}|^2-\widetilde{M}^2(|\xi|^{2\lfloor\kappa\rfloor}+|\xi|^{2\kappa})\|v_1\|_{L^{1,\kappa}}^2\label{Est-06}
\end{align}
with a positive constant $\widetilde{M}$. 
Therefore, the estimate of $I_n^{(3)}(t;\gamma)$ in \eqref{Est-01} is changed into
\begin{align*}
	I_n^{(3)}(t;\gamma)&\geqslant\frac{|P_{x^{\lfloor\kappa\rfloor-1}v_1}|^2}{4[(\lfloor\kappa\rfloor-1)!]^2}\int_{\mb{R}^n}\mathrm{e}^{-\gamma|\xi|^2}\frac{|\sin(|\xi|^{\sigma}t)|^2}{|\xi|^{2\sigma+2s-2\lfloor\kappa\rfloor+2}}\,\mathrm{d}\xi\\
	&\quad\ -\frac{\widetilde{M}^2}{2}\|v_1\|_{L^{1,\kappa}}^2\int_{\mb{R}^n}\mathrm{e}^{-\gamma|\xi|^2}\frac{|\sin(|\xi|^{\sigma}t)|^2}{|\xi|^{2\sigma+2s}}\,(|\xi|^{2\lfloor\kappa\rfloor}+|\xi|^{2\kappa})\,\mathrm{d}\xi.
\end{align*}
By following the same procedure as \eqref{Est-02}, e.g. the use of mapping $s\mapsto s-\lfloor\kappa\rfloor+1$ in it because of $n>2\sigma+2(s-\lfloor\kappa\rfloor+1)$, 
the dominant term is estimated by
\begin{align*}
	\frac{|P_{x^{\lfloor\kappa\rfloor-1}v_1}|^2}{4[(\lfloor\kappa\rfloor-1)!]^2}\int_{\mb{R}^n}\mathrm{e}^{-\gamma|\xi|^2}\frac{|\sin(|\xi|^{\sigma}t)|^2}{|\xi|^{2\sigma+2s-2\lfloor\kappa\rfloor+2}}\,\mathrm{d}\xi\gtrsim \gamma^{-\frac{n-2\sigma-2(s-\lfloor\kappa\rfloor+1)}{2}}|P_{x^{\lfloor\kappa\rfloor-1}v_1}|^2
\end{align*}
for large time $t\gg1$. Similarly, the change of variable hints
\begin{align*}
	&\frac{\widetilde{M}^2}{2}\|v_1\|_{L^{1,\kappa}}^2\int_{\mb{R}^n}\mathrm{e}^{-\gamma|\xi|^2}\frac{|\sin(|\xi|^{\sigma}t)|^2}{|\xi|^{2\sigma+2s}}\,(|\xi|^{2\lfloor\kappa\rfloor}+|\xi|^{2\kappa})\,\mathrm{d}\xi\\
	&\qquad\lesssim\left(\gamma^{-\frac{n+2-2\sigma-2(s-\lfloor\kappa\rfloor+1)}{2}}+\gamma^{-\frac{n+2-2\sigma-2(s-\kappa+1)}{2}}\right)\|v_1\|_{L^{1,\kappa}}^2\\
	&\qquad\lesssim \gamma^{-\frac{n-2\sigma-2(s-\lfloor\kappa\rfloor+1)}{2}-1}\|v_1\|_{L^{1,\kappa}}^2
\end{align*}
for suitable large $\gamma>1$. Thus, by choosing a suitable large (but fixed) constant $\gamma$ similarly to the one of \eqref{Est-03}, we directly conclude
\begin{align*}
	I_n^{(3)}(t;\gamma)\gtrsim \frac{1}{2}\,\gamma^{-\frac{n-2\sigma-2(s-\lfloor\kappa\rfloor+1)}{2}}|P_{x^{\lfloor\kappa\rfloor-1}v_1}|^2\ \ \Rightarrow\ \  \ml{I}_{n,\sigma,s}(t)\gtrsim|P_{x^{\lfloor\kappa\rfloor-1}v_1}|^2,
\end{align*}
which ensures the large time optimality of bounded estimate \eqref{Bdd-est} for $v_0\in L^2$ and $v_1\in L^{1,\kappa}$ with \eqref{assumption-v1} if $\lfloor\kappa\rfloor>\sigma+s-\frac{n}{2}+1$ when $n\in[1, 2\sigma+2s]$. It completes our proof.
\end{proof}

 \begin{remark}\label{Rem-critical}
 Let us consider a class of limit regularity case $\lfloor\kappa\rfloor-1=\sigma+s-\frac{n}{2}\in\mb{N}_0$ for some $(n,\sigma,s)$. Then, by using the same techniques as \eqref{Est-05} associate with \eqref{Est-06}, recalling
 \begin{align*}
 	2\min\{\lfloor \kappa\rfloor,\kappa\}-2\sigma-2s+n-1=1>-1,
 \end{align*}
  one derives
 \begin{align*}
 \ml{I}_{n,\sigma,s}(t)&\geqslant\frac{1}{2}\int_{\mb{R}^n}\mathrm{e}^{-|\xi|^2}\,\frac{|\sin(|\xi|^{\sigma}t)|^2}{|\xi|^{2\sigma+2s}}\,|\widehat{v}_1(\xi)|^2\,\mathrm{d}\xi-\|v_0\|_{L^2}^2\\
 &\gtrsim|P_{x^{\lfloor\kappa\rfloor-1}v_1}|^2\int_{\mb{R}^n}\mathrm{e}^{-|\xi|^2}\,|\sin(|\xi|^{\sigma}t)|^2\,|\xi|^{2(\lfloor\kappa\rfloor-1)-2\sigma-2s}\,\mathrm{d}\xi\\
 &\quad\ -\|v_1\|_{L^{1,\kappa}}^2\int_{\mb{R}^n}\mathrm{e}^{-|\xi|^2}\,|\sin(|\xi|^{\sigma}t)|^2(|\xi|^{2\lfloor\kappa\rfloor}+|\xi|^{2\kappa})|\xi|^{-2\sigma-2s}\,\mathrm{d}\xi-\|v_0\|_{L^2}^2\\
 &\gtrsim|P_{x^{\lfloor\kappa\rfloor-1}v_1}|^2\int_{\nu_0}^{+\infty}\mathrm{e}^{-r^2}\,r^{-1}\,\mathrm{d}r-\|v_1\|_{L^{1,\kappa}}^2\int_0^{+\infty}\mathrm{e}^{-r^2}\,(r^{2\lfloor\kappa\rfloor}+r^{2\kappa})\,r^{-2\sigma-2s+n-1}\,\mathrm{d}r-\|v_0\|_{L^2}^2\\
 &\gtrsim \ln t\,|P_{x^{\lfloor\kappa\rfloor-1}v_1}|^2-\|v_1\|_{L^{1,\kappa}}^2-\|v_0\|_{L^2}^2,
 \end{align*}
which leads to $\ml{I}_{n,\sigma,s}(t)\gtrsim \ln t\,|P_{x^{\lfloor\kappa\rfloor-1}v_1}|^2$ for large time $t\gg1$. Note that
\begin{align*}
\int_{\mb{R}^n}\mathrm{e}^{-|\xi|^2}\,|\sin(|\xi|^{\sigma}t)|^2\,|\xi|^{2(\lfloor\kappa\rfloor-1)-2\sigma-2s}\,\mathrm{d}\xi&\geqslant\sum\limits_{j=0}^{+\infty}\int_{\nu_j\leqslant|\xi|\leqslant\mu_j}\mathrm{e}^{-|\xi|^2}\left(\frac{1}{\sqrt{2}}\right)^2|\xi|^{-n}\,\mathrm{d}\xi\\
&\geqslant\frac{\omega_n}{4}\int_{\nu_0}^{+\infty}\mathrm{e}^{-r^2}\,r^{-1}\,\mathrm{d}r,
\end{align*}
analogously to \eqref{Est-07}.
 That is to say, in such critical regularity case $\lfloor\kappa\rfloor=\sigma+s-\frac{n}{2}+1$, by assuming $|P_{x^{\lfloor\kappa\rfloor-1}v_1}|\neq0$, the time-dependent function $\ml{I}_{n,\sigma,s}(t)$ is still instable for large time. The special case with $(\kappa,n,\sigma,s)=(2,2,2,0)$ and $|P_{u_1}|=0$ exactly coincide with the one of \cite[Case (2) of Theorem 1.5]{Ikehata=2024} So, the  condition $\lfloor\kappa\rfloor>\sigma+s-\frac{n}{2}+1$ is necessary in general and sufficient for the global (in time) stability of $\ml{I}_{n,\sigma,s}(t)$ with $L^2\cap L^{1,\kappa}$ data associated with the assumption \eqref{assumption-v1}.
 \end{remark}

Finally, we just need to apply Proposition \ref{Prop-Stable} with $\widehat{\ml{K}}(t,\xi;1)=\cos(|\xi|t)$, $\widehat{v}_0(\xi)=\widehat{u}_0(\xi)$, $\widehat{v}_1(\xi)=|\xi|^s\widehat{u}_1(\xi)$ and $\sigma=1$, as usual, to complete our proof of Theorem \ref{Thm-stable}.

\section{Applications of our results to other evolution models}\setcounter{equation}{0}\label{Section-Applications}
\hspace{5mm}In this section, we will apply Propositions \ref{Prop-Upper}-\ref{Prop-Blow-up} concerning $L^2\cap L^{1,1}$ data for some evolution equations. Absolutely, one may also apply Proposition \ref{Prop-L2-data} concerning $L^2$ data, and Proposition \ref{Prop-Stable} concerning $L^2\cap L^{1,\kappa}$ data, for these evolution equations without any additional difficulty (we do not show all of them for the sake of briefness).
\subsection{Wave equation with scale-invariant terms}
\hspace{5mm}Let us consider the linear scale-invariant wave equation with dissipation and mass, e.g. \cite{D'Abbicco-Lucente-Reissig=2015,Nasci-Palmieri-Reissig=2017,Palmieri-Reissig=2019,Palmieri-Tu=2021}, namely,
\begin{align}\label{Eq-Scale-Invariant-Waves}
\begin{cases}
\displaystyle{\ml{u}_{tt}-\Delta \ml{u}+\frac{\tau_1}{1+t}\,\ml{u}_t+\frac{\tau_2}{(1+t)^2}\,\ml{u}=0,}&x\in\mb{R}^n,\ t>0,\\
(\ml{u},\ml{u}_t)(0,x)=(\ml{u}_0,\ml{u}_1)(x),&x\in\mb{R}^n,
\end{cases}
\end{align}
with the parameters $\tau_1>0$ and $\tau_2\in\mb{R}$. As the previous literature \cite{Nasci-Palmieri-Reissig=2017,Palmieri-Reissig=2019,Palmieri-Tu=2021} and references therein, it turns out that the quantity $\delta_{\tau_1,\tau_2}:=(\tau_1-1)^2-4\tau_2$ is useful to describe some of the properties to \eqref{Eq-Scale-Invariant-Waves}. We in this part focus on $\delta_{\tau_1,\tau_2}=1$, in other words, $\tau_2\equiv\frac{\tau_1^2}{4}-\frac{\tau_1}{2}$. By applying the dissipative transformation
\begin{align*}
\ml{u}(t,x)=(1+t)^{-\frac{\tau_1}{2}}u(t,x),
\end{align*}
since $\delta_{\tau_1,\tau_2}=1$, the Cauchy problem \eqref{Eq-Scale-Invariant-Waves} is transformed to the free wave equation \eqref{Eq-Waves} with the initial data $u_0(x)=\ml{u}_0(x)$ and $u_1(x)=\frac{\tau_1}{2}\ml{u}_0(x)+\ml{u}_1(x)$. Then, according to Theorem \ref{Thm-L1-data}, we may arrive at the optimal estimates
\begin{align*}
	t^{-\frac{\tau_1}{2}}\ml{D}_{n,1,s}(t)|P_{\frac{\tau_1}{2}\nabla^s \ml{u}_0+\nabla^s\ml{u}_1}|\lesssim\|\ml{u}(t,\cdot)\|_{L^2}\lesssim t^{-\frac{\tau_1}{2}}\ml{D}_{n,1,s}(t)\|(\ml{u}_0,\nabla^s\ml{u}_0,\nabla^s\ml{u}_1)\|_{L^2\times(L^2\cap L^1)^2}
\end{align*}
for large time $t\gg1$, provided that $|P_{\frac{\tau_1}{2}\nabla^s \ml{u}_0+\nabla^s\ml{u}_1}|\neq0$, when $n\in(2s,+\infty)$; and the blow-up in finite time $\|\ml{u}(t,\cdot)\|_{L^2}=+\infty$ for $t\in(0,t_0]$ when $n\in[1,2s]$. Especially, by choosing $\tau_1=2+2s-n$ in the lower dimensions $n\in(2s,2+2s)$, the lower order terms (positive dissipation term but negative mass term)
\begin{align*}
\frac{2+2s-n}{1+t}\,\ml{u}_t-\frac{(n-2s)(2+2s-n)}{4(1+t)^2}\,\ml{u}
\end{align*}
is sufficient to globally (in time) stabilize  the free wave equation with $\nabla^s(L^2\cap L^1)$ initial data such that $\|\ml{u}(t,\cdot)\|_{L^2}\approx |P_{\frac{\tau_1}{2}\nabla^s \ml{u}_0+\nabla^s\ml{u}_1}|$ for any $s\geqslant0$.

\subsection{Undamped $\sigma$-evolution equation}
\hspace{5mm}Let us consider the undamped $\sigma$-evolution equation, e.g. \cite{Ebert-Reissig=2018,Ebert-Lour=2019,Ikehata=2024}, as a general wave model with fractional Laplacian, namely,
\begin{align}\label{Eq-sigma-eq}
\begin{cases}
w_{tt}+(-\Delta)^{\sigma}w=0,&x\in\mb{R}^n,\ t>0,\\
(w,w_t)(0,x)=(w_0,w_1)(x),&x\in\mb{R}^n,
\end{cases}
\end{align}
with $\sigma\geqslant 1$, particularly, $\sigma=2$ appears in the classical beam/plate equation. By taking $\widehat{\ml{K}}(t,\xi;\sigma)=\cos(|\xi|^{\sigma}t)$, $\widehat{v}_0(\xi)=\widehat{w}_0(\xi)$, $\widehat{v}_1(\xi)=|\xi|^s\widehat{w}_1(\xi)$ in Propositions \ref{Prop-Upper}-\ref{Prop-Blow-up}, the solution to \eqref{Eq-sigma-eq} in the Fourier space is expressed via
\begin{align*}
\widehat{w}(t,\xi)=\widehat{\ml{K}}(t,\xi;\sigma)\,\widehat{v}_0(\xi)+\frac{\sin(|\xi|^{\sigma}t)}{|\xi|^{\sigma}}\,\widehat{v}_1(\xi).
\end{align*}
Then, we may arrive at the optimal estimates
\begin{align*}
\ml{D}_{n,\sigma,s}(t)|P_{\nabla^s w_1}|\lesssim\|w(t,\cdot)\|_{L^2}\lesssim\ml{D}_{n,\sigma,s}(t)\|(w_0,\nabla^sw_1)\|_{L^2\times(L^2\cap L^1)}
\end{align*}
for large time $t\gg1$, provided that $|P_{\nabla^s w_1}|\neq0$, when $n\in(2s,+\infty)$; and the blow-up in finite time $\|w(t,\cdot)\|_{L^2}=+\infty$ for $t\in(0,t_0]$ when $n\in[1,2s]$. These results coincide with \cite[Theorems 1.1 and 1.2]{Ikehata=2024}, where \cite{Ikehata=2024} considered $s=0$, $n\leqslant 4$ and $\sigma=2$. This result carrying $s=0$ gives a positive answer to the conjecture from \cite[Remark 1.1]{Ikehata=2024} via the critical dimension $n=2\sigma$. Furthermore, it answers the unknown general case $\sigma\geqslant 1$ even with higher regularity $s> 0$.

\subsection{Critical Moore-Gibson-Thompson equation}
\hspace{5mm}Let us consider the critical Moore-Gibson-Thompson (MGT) equation, e.g. \cite{Kaltenbacher-Lasiecka-Marchand=2011,Marchand-McDevitt-Triggiani=2012,Chen-Palmieri=2020,Chen=2024}, arising in  linear acoustics of inviscid fluids (i.e. the Lighthill approximation of compressible Euler-Cattaneo system under the irrotational flow), namely,
\begin{align}\label{Eq-MGT}
\begin{cases}
\tau\psi_{ttt}+\psi_{tt}-\Delta\psi-\tau\Delta\psi_t=0,&x\in\mb{R}^n,\ t>0,\\
(\psi,\psi_t,\psi_{tt})(0,x)=(\psi_0,\psi_1,\psi_2)(x),&x\in\mb{R}^n,
\end{cases}
\end{align}
with the thermal relaxation $\tau>0$ from the application of Cattaneo law of heat conduction and the acoustic velocity potential $\psi=\psi(t,x)\in\mb{R}$. It was proved by \cite{Kaltenbacher-Lasiecka-Marchand=2011,Marchand-McDevitt-Triggiani=2012} that the exponential stability of its corresponding semigroup is lost (due to the lack of viscous dissipation).
 Let us re-organize the representation of solution to \eqref{Eq-MGT} in the Fourier space deduced in \cite[Proof of Proposition 2]{Chen-Palmieri=2020}. By taking
\begin{align*}
\widehat{\ml{K}}(t,\xi;1)\,\widehat{v}_0(\xi)&=\left(\frac{\cos(|\xi|t)}{1+\tau^2|\xi|^2}+\frac{\tau|\xi|\sin(|\xi|t)}{1+\tau^2|\xi|^2}+\frac{\tau^2|\xi|^2}{2(1+\tau^2|\xi|^2)}\,\mathrm{e}^{-\frac{t}{\tau}}\right)\widehat{\psi}_0(\xi)\\
&\quad\ +\left(-\frac{\tau^3|\xi|\sin(|\xi|t)}{1+\tau^2|\xi|^2}-\frac{\tau^2\cos(|\xi|t)}{1+\tau^2|\xi|^2}+\frac{\tau^2}{2(1+\tau^2|\xi|^2)}\,\mathrm{e}^{-\frac{t}{\tau}}\right)\widehat{\psi}_2(\xi),
\end{align*}
and $\widehat{v}_1(\xi)=|\xi|^s(\widehat{\psi}_1(\xi)+\tau\widehat{\psi}_2(\xi))$, $\sigma=1$ in Propositions \ref{Prop-Upper}-\ref{Prop-Blow-up}, the solution to \eqref{Eq-MGT} in the Fourier space is expressed via
\begin{align*}
\widehat{\psi}(t,\xi)=\widehat{\ml{K}}(t,\xi;1)\,\widehat{v}_0(\xi)+\frac{\sin(|\xi|t)}{|\xi|}\,\widehat{v}_1(\xi).
\end{align*}
Obviously, it holds
\begin{align*}
\frac{|\xi|+|\xi|^2}{1+\tau^2|\xi|^2}\lesssim 1\ \ \Rightarrow \ \ |\widehat{\ml{K}}(t,\xi;1)\,\widehat{v}_0(\xi)|\lesssim |\widehat{\psi}_0(\xi)|+|\widehat{\psi}_2(\xi)|.
\end{align*}
Then, we may arrive at the optimal estimates
\begin{align*}
\ml{D}_{n,1,s}(t)|P_{\nabla^s(\psi_1+\tau\psi_2)}|\lesssim \|\psi(t,\cdot)\|_{L^2}\lesssim\ml{D}_{n,1,s}(t)\|(\psi_0,\nabla^s\psi_1,\nabla^s\psi_2)\|_{L^2\times (L^2\cap L^1)\times (L^2\cap L^1)}
\end{align*}
for large time $t\gg1$, provided that $|P_{\nabla^s(\psi_1+\tau\psi_2)}|\neq0$, when $n\in(2s,+\infty)$; and the blow-up in finite time $\|\psi(t,\cdot)\|_{L^2}=+\infty$ for $t\in(0,t_0]$ when $n\in[1,2s]$. These results coincide with \cite[Theorem 4]{Chen=2024} when $s=0$ and $n\leqslant 2$, furthermore, improve that for the general regularity $s\geqslant 0$ and $n\geqslant 3$.

\subsection{Linearized compressible Euler system}
\hspace{5mm}Let us consider the well-known linearized compressible Euler system (see, for example, the textbook \cite{Pierce=1989}), describing the ideal gases from fluid mechanics, namely,
\begin{align}\label{Eq-Euler}
\begin{cases}
\rho_t+\beta\divv\mathbf{u}=0,&x\in\mb{R}^n,\ t>0,\\
\mathbf{u}_t+\beta\nabla\rho=0,&x\in\mb{R}^n,\ t>0,\\
(\rho,\mathbf{u})(0,x)=(\rho_0,\mathbf{u}_0)(x),&x\in\mb{R}^n,
\end{cases}
\end{align}
with the density $\rho=\rho(t,x)\in\mb{R}$ and the vector velocity $\mathbf{u}=\mathbf{u}(t,x)\in\mb{R}^n$, where $\beta$ is a positive constant. By straightforward computation, the solutions to \eqref{Eq-Euler} in the Fourier space are given by
\begin{align*}
\widehat{\rho}(t,\xi)&=\cos(\beta|\xi|t)\,\widehat{\rho}_0(\xi)-i\beta\frac{\sin(\beta|\xi|t)}{\beta|\xi|}\,\big(\xi\cdot\mathbf{u}_0(\xi)\big),\\
\widehat{\mathbf{u}}(t,\xi)&=\left(\mathbf{u}_0(\xi)-\frac{\xi(\xi\cdot\mathbf{u}_0(\xi))}{|\xi|^2}+\cos(\beta|\xi|t)\frac{\xi(\xi\cdot\mathbf{u}_0(\xi))}{|\xi|^2}\right)-i\beta\frac{\sin(\beta|\xi|t)}{\beta|\xi|}\,\xi\widehat{\rho}_0(\xi).
\end{align*}
According to Propositions \ref{Prop-Upper}-\ref{Prop-Blow-up} with suitable kernels $\widehat{\ml{K}}(t,\xi;1)$ and initial data, we may arrive at the optimal estimates
\begin{align*}
\ml{D}_{n,1,s}(t)|P_{\nabla^s\divv\mathbf{u}_0}|&\lesssim\|\rho(t,\cdot)\|_{L^2}\lesssim \ml{D}_{n,1,s}(t)\|(\rho_0,\nabla^s\divv\mathbf{u}_0)\|_{L^2\times(L^2\cap L^1)},\\
\ml{D}_{n,1,s}(t)|P_{\nabla^{s+1}\rho_0}|&\lesssim\|\mathbf{u}(t,\cdot)\|_{L^2}\lesssim \ml{D}_{n,1,s}(t)\|(\nabla^{s+1}\rho_0,\mathbf{u}_0)\|_{(L^2\cap L^1)\times L^2},
\end{align*}
for large time $t\gg1$, provided that $|P_{\nabla^s\divv\mathbf{u}_0}|\neq0\neq|P_{\nabla^{s+1}\rho_0}|$, when $n\in(2s,+\infty)$; and the blow-up in finite time $\|\rho(t,\cdot)\|_{L^2}=+\infty$, $\|\mathbf{u}(t,\cdot)\|_{L^2}=+\infty$ for $t\in(0,t_0]$ when $n\in[1,2s]$.

\section*{Acknowledgments} 
 Wenhui Chen is supported in part by the National Natural Science Foundation of China (grant No. 12301270, grant No. 12171317), Guangdong Basic and Applied Basic Research Foundation (grant No. 2025A, grant No. 2023A1515012044), 2024 Basic and Applied Basic Research Topic--Young Doctor Set Sail Project (grant No. 2024A04J0016). Ryo Ikehata is supported in part by Grant-in-Aid for scientific Research (C) 20K03682 of JSPS.

\end{document}